\newcommandx{\huom}[2][1=]{\todo[linecolor=red,backgroundcolor=red!10,bordercolor=red,#1]{#2}}
\newcommand{\ve}{\varepsilon}
\newcommand{\N}{\mathbb{N}}
\newcommand{\C}{\mathbb{C}}
\newcommand{\R}{\mathbb{R}}
\newcommand{\Ss}{\mathbb{S}}
\newcommand{\ang}[1]{\left\langle #1 \right\rangle}
\newcommand{\p}{\partial}
\DeclareMathOperator\dv{div}
\newcommand{\pinf}{\partial_{\infty}}
\DeclareMathOperator\Ric{Ric}
\DeclareMathOperator\Hess{Hess}
\DeclareMathOperator\pcap{cap_\emph{p}}
\numberwithin{equation}{section}
\theoremstyle{plain}
\newtheorem{thm}{Theorem}[section]
\newtheorem{cor}[thm]{Corollary}
\newtheorem{prop}[thm]{Proposition}
\theoremstyle{definition}
\newtheorem{defin}[thm]{Definition}
\begin{document}

\title[Minimal graphic and $p$-harmonic functions]{Existence and non-existence of minimal 
graphic and $p$-harmonic functions}

\author{Jean-Baptiste Casteras}
\address{J.-B. Casteras, Departement de Mathematique
Universite libre de Bruxelles, CP 214, Boulevard du Triomphe, B-1050 Bruxelles, Belgium}
\email{jeanbaptiste.casteras@gmail.com}

\author{Esko Heinonen}
\address{E. Heinonen, Department of Mathematics and Statistics, P.O.B. 68 (Gustaf 
H\"allstr\"omin katu 2b), 00014 University
of Helsinki, Finland.}
\email{esko.heinonen@helsinki.fi}

\author{Ilkka Holopainen}
\address{I.Holopainen, Department of Mathematics and Statistics, P.O.B. 68 (Gustaf 
H\"allstr\"omin katu 2b), 00014 University
of Helsinki, Finland.}
\email{ilkka.holopainen@helsinki.fi}

\thanks{J.-B.C. supported by MIS F.4508.14 (FNRS)}
\thanks{E.H. supported by Jenny and Antti Wihuri Foundation.}

\subjclass[2000]{Primary 58J32; Secondary 53C21, 31C45}
\keywords{Mean curvature equation, $p$-Laplace equation, Dirichlet problem, Hadamard manifold}


\begin{abstract}
We prove that every entire solution of the minimal graph equation that is bounded from below and has at most linear growth must be constant on a complete Riemannian manifold $M$ with only one end if $M$ has asymptotically non-negative sectional  curvature.  On the other hand, we prove the
existence of bounded non-constant minimal graphic and $p$-harmonic functions on rotationally symmetric
Cartan-Hadamard manifolds under optimal assumptions on the sectional curvatures.
\end{abstract}
\maketitle

\section{Introduction}
It is an interesting question to ask under which conditions on the underlying space $M$ there exist entire non-constant
bounded solutions $u\colon M\to\R$ to the minimal graph equation
  \begin{equation}\label{mingraph}
    \dv \frac{\nabla u}{\sqrt{1+|\nabla u|^2}} = 0
  \end{equation}
or to the $p$-Laplace equation
  \begin{equation}\label{plaplaceeqn}
   \dv (|\nabla u|^{p-2} \nabla u) = 0.
  \end{equation}
Namely, in $\R^n$ there is the famous Bernstein theorem which states that entire solutions of \eqref{mingraph}
are affine for dimensions $n\le 7$. Moreover, entire positive solutions in $\R^n$ are constant in all dimensions by the celebrated result due to Bombieri, De Giorgi, and Miranda \cite{bombieri}. For the $p$-harmonic equation \eqref{plaplaceeqn}
the situation is the same as for the harmonic functions, i.e. entire positive solutions in $\R^n$ are constants, the reason being the validity of 
a global Harnack's inequality.  

If the underlying space is changed from $\R^n$ to a Cartan-Hadamard manifold with sufficiently negative curvature, 
the situation changes for the both equations. The existence results have been proved by studying the
so-called \emph{asymptotic Dirichlet problem}. If $M$ is an $n$-dimensional Cartan-Hadamard manifold, it can be compactified
by adding a \emph{sphere at infinity}, $\pinf M$, and equipping the resulting space $\bar M \coloneqq M \cup 
\pinf M$ by the \emph{cone topology}. With this compactification $\bar M$ is homeomorphic to the closed
unit ball and $\pinf M$ is homeomorphic to the unit sphere $\Ss^{n-1}$. For details, see \cite{EO}. The asymptotic 
Dirichlet problem can then be stated
as follows: Given a continuous function $\theta \colon \pinf M \to \R$, find a function $u \in C(\bar M)$
that is a solution to the desired equation in $M$ and has ``boundary values'' $\theta$ on $\pinf M$.

Recently the asymptotic Dirichlet problem for minimal graph, $f$-minimal graph, $p$-harmonic and $\mathcal{A}$-harmonic equations 
has been studied for example in 
\cite{HoVa}, \cite{Va1}, \cite{Va2}, \cite{CHR}, \cite{HR_ns}, \cite{CHR1}, \cite{CHH1}, \cite{CHH2}, and \cite{He},  
where the existence of solutions 
was studied under various curvature assumptions and via different methods.
In \cite{CHR1} the existence of solutions to the minimal graph equation and to
the $\mathcal{A}$-harmonic equation was proved in dimensions $n\ge 3$ under curvature assumptions
	\begin{equation}\label{CHR_curvas}
	-\frac{\big(\log r(x))^{2\bar\ve}}{r(x)^2} \le K(P_x) \le -\frac{1+\ve}{r(x)^2 \log r(x)},
	\end{equation}
where $\ve>\bar\ve>0$, $P_x\subset T_xM$ is a $2$-dimensional subspace, $x\in M\setminus B(o,R_0)$, and $r(x) = d(o,x)$ is the distance to
a fixed point $o\in M$. In \cite{He} it was shown that
in the case of $\mathcal{A}$-harmonic functions the curvature lower bound can be replaced by a so-called
pinching condition
    \[
      |K(P_x)| \le C|K(P_x')|,
    \]
where $C$ is some constant and $P_x,P_x'\subset T_xM$.
One of our main theorems shows that in the above result the upper bound for the curvatures
is (almost) optimal, namely we prove the following.
\begin{thm}\label{bernstein} 
    Let $M$ be a complete Riemannian manifold with asymptotically non-negative sectional curvature and only one end. If 
    $u\colon M\to \R$ is a solution to the minimal graph equation \eqref{mingraph} that is bounded from below and has 
    at most linear growth, then it must be a constant.
    In particular, if $M$ is a Cartan-Hadamard manifold with asymptotically non-negative sectional curvature, the asymptotic Dirichlet problem is not solvable.
 \end{thm}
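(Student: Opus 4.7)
The plan is to combine a Korevaar-type interior gradient estimate for the minimal graph equation with a Liouville property for uniformly elliptic divergence-form operators on manifolds with asymptotically non-negative Ricci curvature and a single end. First I would establish that any $C^2$ solution of \eqref{mingraph} defined on a geodesic ball $B(x,R)$ satisfies
\[
|\nabla u|(x) \leq \exp\!\bigl( C_1 + C_2 \osc_{B(x,R)} u / R \bigr),
\]
where $C_1, C_2$ depend only on $n$ and on the Ricci lower bound on $B(x,R)$, which is uniform outside a compact set thanks to the asymptotic non-negativity of the sectional curvature. Since $u$ is bounded from below and has at most linear growth, $\osc_{B(x,R)} u / R$ is bounded by a constant depending only on the linear growth rate of $u$, so letting $R \to \infty$ yields a global bound $\sup_M |\nabla u| \leq L < \infty$.

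With $|\nabla u| \leq L$ everywhere, the minimal graph operator, written in divergence form as $\dv\bigl(a^{ij}(\nabla u)\partial_j u\bigr)$, has uniformly elliptic coefficients whose ellipticity constants depend only on $L$. Thus $u$ solves a uniformly elliptic linear equation of divergence type, and, translating by a lower bound $A$ for $-u$, the function $v = u + A$ is a \emph{non-negative} weak solution of the same equation on $M$. Asymptotically non-negative sectional curvature implies asymptotically non-negative Ricci curvature which, together with the one-end hypothesis, yields volume doubling and a uniform Poincaré inequality on $M$. Saloff-Coste's characterization then provides a scale-invariant elliptic Harnack inequality for non-negative solutions of the equation. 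Applying it on $B(x_0, R)$ centered at a point $x_0$ with $v(x_0) > 0$ (otherwise $u$ is already constant) yields $\sup_{B(x_0, R/2)} v \leq C\, v(x_0)$ with $C$ independent of $R$, so $v$ is globally bounded; a final Yau-type argument (bounded solutions of such operators on manifolds with asymptotically non-negative Ricci and a single end are constant) concludes the proof.

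The main obstacle is Step~1: the Riemannian version of the Korevaar-type gradient estimate requires carefully tracking the curvature terms that enter via the Bochner identity (equivalently, via the geometry of the minimal graph in $M\times\R$) and checking that asymptotic non-negativity of the sectional curvature of $M$ is enough to keep the exponent uniform in $R$. The one-end hypothesis is indispensable in the Liouville step, where it rules out well-known counterexamples coming from products such as $N\times\R$.
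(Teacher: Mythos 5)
Your proposal follows essentially the same two-step route as the paper: a Korevaar-type interior gradient estimate (Proposition~\ref{gradest} and Corollary~\ref{cor-glob-grad-est}, where the decisive point --- which you correctly flag as the main obstacle --- is that (ANSC) forces the curvature scale $K_0(p,R)\lesssim 1/R$ on balls $B(p,d(o,p)/2)$, since a merely uniform Ricci lower bound would not keep the exponent bounded for linearly growing $u$), followed by rewriting \eqref{mingraph} as a uniformly elliptic weighted Laplace equation and running the Grigor'yan--Saloff-Coste/Moser Harnack machinery on the single end. The only cosmetic difference in the second step is that the paper obtains the global Harnack inequality by chaining local Harnack inequalities on remote balls via the ball-covering property and the maximum principle, rather than invoking a full-scale (VD)+(PI) characterization outright, and then concludes directly from $\sup u\le C\inf u$ with $\inf_M u=0$ instead of passing through a separate Liouville theorem for bounded solutions.
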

The notion of asymptotically non-negative sectional curvature (ANSC) is defined in Definition~\ref{def-notions}. 
It is worth pointing out that we do not assume, differing from previous results into this direction, the Ricci curvature to be non-negative;  see
e.g. \cite{RSS}, \cite{DJX}, \cite{DL}, \cite{Dajczer2016}.
   
Our theorem gives immediately the following corollary.
\begin{cor}\label{1st-cor}
 Let $M$ be a complete Riemannian manifold with only one end and assume that the sectional curvatures of $M$ satisfy
  \begin{equation*}
   K(P_x) \ge -\frac{C}{r(x)^2 \big(\log r(x)\big)^{1+\ve}}
  \end{equation*}
for sufficiently large $r(x)$ and for any $C>0$ and $\ve>0$.
Then any solution $u\colon M\to [a,\infty)$ with at most linear growth to the minimal graph 
equation \eqref{mingraph} must be constant.
\end{cor}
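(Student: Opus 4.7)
The plan is to deduce the corollary directly from Theorem~\ref{bernstein}, so the only real task is to verify that the pointwise curvature lower bound in the hypothesis implies the asymptotically non-negative sectional curvature (ANSC) condition required there. Recall that ANSC (Definition~\ref{def-notions}) asks for a continuous, non-increasing function $\lambda\colon [0,\infty)\to[0,\infty)$ such that $K(P_x)\ge -\lambda(r(x))$ for all $x$ and $\int_0^\infty t\,\lambda(t)\,dt<\infty$. Once such a $\lambda$ is exhibited, the remaining hypotheses of Theorem~\ref{bernstein} are visibly satisfied: $u\colon M\to[a,\infty)$ is bounded from below by $a$, the linear growth assumption is identical, and the single-end assumption is carried over verbatim.

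The concrete step is to build $\lambda$ from the upper bound in the corollary. For $r$ larger than some $R_0>1$, the hypothesis gives
\begin{equation*}
K(P_x)\ge -\frac{C}{r(x)^2\bigl(\log r(x)\bigr)^{1+\ve}},
\end{equation*}
so set $\lambda(t)=C/\bigl(t^2(\log t)^{1+\ve}\bigr)$ for $t\ge R_0$, and extend $\lambda$ to $[0,R_0]$ as the constant value $\lambda(R_0)$, which makes $\lambda$ continuous and non-increasing on $[0,\infty)$ (on $[R_0,\infty)$ the map $t\mapsto t^2(\log t)^{1+\ve}$ is increasing for $R_0$ large enough so that $\log R_0>0$). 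The integrability check reduces, after substituting $u=\log t$, to
\begin{equation*}
\int_{R_0}^{\infty} t\,\lambda(t)\,dt = C\int_{R_0}^{\infty}\frac{dt}{t\,(\log t)^{1+\ve}} = C\int_{\log R_0}^{\infty}\frac{du}{u^{1+\ve}}<\infty,
\end{equation*}
precisely because $\ve>0$. Thus $M$ has ANSC.

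With ANSC established and the remaining hypotheses transparently inherited, Theorem~\ref{bernstein} applies and forces $u$ to be constant, proving the corollary. There is no genuine obstacle; the only thing to notice is that the exponent $1+\ve$ in the curvature bound is exactly what makes the Bertrand-type integral $\int dt/(t(\log t)^{1+\ve})$ converge, which is what lines the corollary up with the (almost) sharp upper curvature threshold $-(1+\ve)/(r^2\log r)$ discussed after \eqref{CHR_curvas}.
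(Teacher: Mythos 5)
Your proof is correct and is exactly the paper's intended argument: the paper offers no separate proof, asserting that the corollary follows ``immediately'' from Theorem~\ref{bernstein}, and the only content is the Bertrand-integral computation $\int^\infty t\,\lambda(t)\,dt = C\int^\infty (\log t)^{-1-\ve}\,t^{-1}\,dt<\infty$ that you carry out. One small repair: extending $\lambda$ to $[0,R_0]$ by the constant $\lambda(R_0)$ does not by itself guarantee $K(P_x)\ge-\lambda(r(x))$ on the compact ball $\bar B(o,R_0)$, where the hypothesis gives no curvature bound; you should instead replace $\lambda$ by $\max\{\lambda,\Lambda_0\}$ with $\Lambda_0=\sup_{\bar B(o,R_0)}\max(0,-K)<\infty$ (finite by compactness and completeness), which is still continuous, decreasing, and integrable against $t\,dt$, after which ANSC holds and Theorem~\ref{bernstein} applies as you say.
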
 
The main tool in the proof of Theorem \ref{bernstein} is the gradient estimate in Proposition \ref{gradest},
where we obtain an upper bound for the gradient of a solution $u$ of the minimal graph equation in terms of an appropriate lower bound for 
the sectional curvature of $M$ and the growth of $u$. Under the assumptions in Theorem~\ref{bernstein}  we obtain a uniform gradient upper bound that enables us to prove a global Harnack's inequality for $u-\inf_M u$.

It is well-known that a global Harnack's inequality (for positive solutions) can be iterated to yield H\"older continuity estimates for all solutions and, furthermore, a Liouville (or Bernstein) type result for solutions with controlled growth. 
\begin{cor}\label{2nd-cor} 
Let $M$ be a complete Riemannian manifold with asymptotically non-negative sectional curvature and only one end.
 Then there exists a constant $\kappa\in (0,1]$, depending only on $n$ and on the function $\lambda$ in the (ANSC) 
condition such that every solution $u\colon M\to\R$ to the minimal graph equation
\eqref{mingraph} with
\[
\lim_{d(x,o)\to\infty}\frac{|u(x)|}{d(x,o)^{\kappa}}=0
\]
must be constant.
\end{cor}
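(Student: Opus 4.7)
The plan is to convert the gradient estimate of Proposition~\ref{gradest} into a scale-invariant local Harnack inequality on balls of $M$, and then iterate it in the classical Moser fashion to obtain H\"older continuity with exponent $\kappa$ depending only on $n$ and $\lambda$; the growth hypothesis then forces the oscillation on every ball to vanish. Concretely, I would first apply Proposition~\ref{gradest} to a non-negative solution $v$ of \eqref{mingraph} on a ball $B(x,2R)$ (with $R$ sufficiently large) and integrate the resulting control on $|\nabla\log v|$ along minimizing geodesics to obtain
\[
\sup_{B(x,R)} v \le C_H \inf_{B(x,R)} v,
\]
with $C_H$ depending only on $n$ and the function $\lambda$ of the (ANSC) condition. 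This is the same circle of ideas used in the proof of Theorem~\ref{bernstein}, only applied on local balls rather than globally.

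Given an arbitrary solution $u\colon M\to\R$, I would next set $M_R = \sup_{B(o,2R)} u$ and $m_R = \inf_{B(o,2R)} u$. Since \eqref{mingraph} is invariant under $u\mapsto c\pm u$, both $M_R-u$ and $u-m_R$ are non-negative solutions on $B(o,2R)$. Applying the local Harnack inequality to each on the smaller ball $B(o,R)$ and adding in the standard way yields the oscillation decay
\[
\osc_{B(o,R)} u \le \theta\,\osc_{B(o,2R)} u, \qquad \theta = \frac{C_H-1}{C_H+1}\in(0,1).
\]
Setting $\kappa := \min\{1,\log_2(1/\theta)\}\in(0,1]$ so that $\theta\cdot 2^\kappa \le 1$, and iterating $k$ times, one obtains
\[
\osc_{B(o,R)} u \le \theta^k \osc_{B(o,2^k R)} u \le R^\kappa\,\frac{\osc_{B(o,2^k R)} u}{(2^k R)^\kappa}.
\]
The growth hypothesis gives $\osc_{B(o,S)} u = o(S^\kappa)$ as $S\to\infty$, so letting $k\to\infty$ forces $\osc_{B(o,R)} u = 0$ for every $R>R_0$, and $u$ must be constant.

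The main obstacle is verifying that the Harnack constant $C_H$ may be chosen independently of both the radius $R$ and the center $x\in M$ (depending only on $n$ and $\lambda$). This is exactly where the (ANSC) hypothesis is essential: since $\lambda(r)\to 0$, the lower sectional curvature bound becomes asymptotically flat, so Proposition~\ref{gradest} produces a uniform Yau-type gradient control on all sufficiently large balls and hence a scale-invariant Harnack constant. Once this step is in place, the rest of the argument is the classical Harnack-to-H\"older-to-Liouville chain and requires no further geometric input; the one-end assumption enters only implicitly through its role in Proposition~\ref{gradest}.
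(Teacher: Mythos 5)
There is a genuine gap at the very first step: you cannot derive the needed Harnack inequality from Proposition~\ref{gradest} by ``integrating the control on $|\nabla\log v|$''. The gradient estimate \eqref{loc-grad-est} bounds $|\nabla v(p)|$ by a quantity that stays bounded away from zero even when $v(p)\to 0$ (for $v(p)\lesssim R$ it gives roughly $|\nabla v|\le C$, an absolute constant), so it gives no control whatsoever on $|\nabla\log v|=|\nabla v|/v$ near the zero set of $v$; integrating $|\nabla v|\le C$ along a geodesic of length $\sim R$ yields only the additive bound $\osc v\le CR$, not a multiplicative inequality $\sup v\le C_H\inf v$. The actual role of the gradient estimate is different: the uniform bound $|\nabla u|\le C$ makes the operator $\dv(A(x)\nabla\,\cdot\,)$, $A=(1+|\nabla u|^2)^{-1/2}$, uniformly elliptic, and the Harnack inequality is then obtained by Moser iteration, which requires the volume doubling and Poincar\'e inequalities on remote balls (supplied by Bishop--Gromov and Buser under (ANSC)). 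None of this appears in your argument.

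Relatedly, your remark that the one-end assumption ``enters only implicitly through its role in Proposition~\ref{gradest}'' is incorrect: that proposition is purely local and uses only a sectional curvature lower bound on $B(p,R)$. The one-end hypothesis is exactly what upgrades the \emph{local} Harnack inequality on remote balls to the \emph{global} inequality $\sup_{B(o,2R)}v\le C\inf_{B(o,2R)}v$ of \eqref{globhar}: one needs $\partial$ of the unbounded component of $M\setminus\bar B(o,2R)$ to be connected and coverable by a bounded number of remote balls, after which one chains the local inequalities and invokes the maximum principle. Without one end this step (and indeed the conclusion, cf.\ Theorem~\ref{grisalmain}) fails. Once the global Harnack inequality is in hand, the remainder of your argument --- applying it to $M_R-u$ and $u-m_R$, the oscillation decay with ratio $\theta\in(0,1)$, the choice $\kappa=\min\{1,\log_2(1/\theta)\}$ (which also guarantees that the $o(d^\kappa)$ hypothesis implies the sublinear growth needed for the gradient bound), and the final limit $R\to\infty$ --- is the same standard Harnack-to-H\"older-to-Liouville chain used in the paper and is correct.
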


Before turning to the existence results, we mention two closely related results by Greene and Wu \cite{GWgap}.
Firstly, in \cite[Theorem 2 and Theorem 4]{GWgap} they show that an $n$-dimensional, $n\neq2$, 
Cartan-Hadamard manifold 
with asymptotically non-negative sectional curvature is isometric to $\R^n$. Secondly, in 
\cite[Theorem 2]{GWgap}
they show that an odd dimensional Riemannian manifold with a pole $o\in M$ and everywhere non-positive or 
everywhere non-negative sectional curvature is isometric to $\R^n$ if $\liminf_{s\to\infty} s^2 k(s) = 0$, where 
$k(s) = \sup\{ |K(P_x)| \colon x\in M, \, d(o,x)=s, P_x\in T_xM \, \text{two-plane}\}$.

We point out that our results differ from those theorems of \cite{GWgap} (besides the methods) 
since we do not assume the existence 
of a pole or the manifold to be simply connected, and the (ANSC) condition allows the sectional 
curvature to change a sign. Moreover, in the following theorems we will see that, 
in order to get the result of Greene and Wu, it is necessary 
to assume $\liminf_{s\to\infty} s^2 k(s) = 0$ for all of the sectional curvatures and not only for the radial ones.

Concerning the existence results, we prove that, at least in the rotationally symmetric case, 
the curvature upper bound can be
slightly improved from \eqref{CHR_curvas}. We also point out that the proof of Theorem~\ref{thm1.5} is very 
elementary compared to the ones in \cite{CHR1} concerning the general cases.
  \begin{thm}[= Corollary \ref{rotsymmingraphcor}]\label{thm1.5}
	Let $M$ be a rotationally symmetric $n$-dimensional Cartan-Hadamard manifold whose radial sectional curvatures outside a compact set 
	satisfy the upper bounds
	 \begin{equation}\label{n=2}
	K(P_x) \le - \frac{1 + \ve }{r(x)^2 \log r(x)}, \quad \text{if } n=2 
	 \end{equation}
      	and
   	 \begin{equation}\label{n>2}
  	K(P_x) \le - \frac{1/2 + \ve }{r(x)^2 \log r(x)}, \quad \text{if } n\ge3. 	 
   	 \end{equation}
     		Then the asymptotic Dirichlet problem for the minimal graph equation \eqref{mingraph} is
	solvable with any continuous boundary data on $\pinf M$.
In particular, there are non-constant bounded entire solutions of \eqref{mingraph} in $M$.  
  \end{thm}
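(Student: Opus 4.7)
The plan is to reduce Theorem~\ref{thm1.5} to an ODE comparison via a general existence result for rotationally symmetric Cartan-Hadamard manifolds. Write the metric in the warped-product form $g = dr^2 + f(r)^2 g_{\Ss^{n-1}}$ with $f(0)=0$, $f'(0)=1$, $f>0$ on $(0,\infty)$. The Jacobi equation $f'' = -K\,f$ along the radial direction translates the curvature upper bounds \eqref{n=2} and \eqref{n>2} into the differential inequality $f''(r) \ge \alpha\, f(r)/\bigl(r^2\log r\bigr)$, with $\alpha = 1 + \ve$ in dimension two and $\alpha = 1/2 + \ve$ in dimension at least three. I expect the body of the paper to establish solvability of the asymptotic Dirichlet problem under an integrability/growth condition on $f$, to be verified here via the inequality above.

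For the general existence step I would follow the standard Perron-type approximation. Given continuous $\theta\colon\pinf M\to\R$, solve the classical Dirichlet problem for \eqref{mingraph} on each ball $B(o,R)$ with continuous boundary values approximating $\theta$ in the cone topology; use the maximum principle for a uniform $C^0$ bound and interior gradient estimates for the minimal surface equation for local $C^{1,\alpha}$ bounds; then extract a locally uniformly convergent subsequence as $R\to\infty$ to produce an entire solution $u$. The crucial step is to show that $u$ attains the prescribed value at each $x_0 \in \pinf M$, which is done by constructing explicit barriers. A natural ansatz at $x_0 = (\infty,\vartheta_0)$ is $\psi(r,\vartheta) = A(r) + B\,h(\vartheta)$, where $h$ is a smooth nonnegative function on $\Ss^{n-1}$ vanishing only at $\vartheta_0$ and $A$ is a radial function with $A'(r) = -c/f(r)^{n-1}$, so that the leading radial part of the minimal graph operator applied to $\psi$ vanishes and the remaining angular terms can be absorbed provided $f$ grows sufficiently fast at infinity.

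The curvature-to-growth step is then a Sturm comparison. Testing the model $f_0(r) = r(\log r)^{\beta}$ one computes $f_0''/f_0 = \beta/(r^2\log r) + O\bigl(1/(r^2 (\log r)^2)\bigr)$, so the differential inequality on $f$ coming from \eqref{n=2} and \eqref{n>2} yields $f(r) \ge c\,r(\log r)^{\alpha}$ for $r$ large. The integrability condition $\int^{\infty} f(r)^{-(n-1)}\,dr < \infty$ then reads $\alpha(n-1) > 1$, which is exactly $\alpha > 1$ when $n=2$ and $\alpha > 1/2$ when $n \ge 3$: precisely the constants appearing in \eqref{n=2} and \eqref{n>2}.

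The main difficulty I expect is sharpness in the barrier computation: the ansatz $\psi$ must be tuned so that the angular error terms produced by $\sqrt{1 + \abs{\nabla\psi}^2}$ and by the angular divergence of $h$ in the rotationally symmetric form of the minimal graph operator are dominated by the radial term, right up to the critical constants. Once this is carried out, the asymptotic Dirichlet problem is solved for any continuous $\theta$, and any non-constant $\theta$ then yields a non-constant bounded entire solution of \eqref{mingraph}, which is the final conclusion of the theorem.
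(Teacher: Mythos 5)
Your overall architecture (solve on exhausting balls, use compactness, and force the boundary values with radial-plus-angular barriers) is the same as the paper's, but the quantitative heart of your argument is wrong in two linked places, and as stated the proof does not close.

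First, the barrier. With $A'(r)=-c/f(r)^{n-1}$ you get $\Delta A = A''+(n-1)(f'/f)A' \equiv 0$ exactly, so after applying the minimal graph operator to $\psi=A+Bh$ the radial part contributes \emph{nothing} negative, while the angular part contributes terms of order $\Delta^{\Ss}h/f^2$ (plus Hessian terms of order $f^{-4}$), which have no sign. No growth assumption on $f$ makes a term of size $f^{-2}$ ``absorbed by'' zero. The paper fixes this by taking the radial profile
\[
\eta(r)=k\int_r^\infty f(t)^{1-n}\int_1^t f(s)^{n-3}\,ds\,dt,\qquad \Delta\eta=-k\,f^{-2},
\]
so that the radial part produces a negative term of exactly the order $f^{-2}$ needed to dominate $\Delta^{\Ss}b/f^2$ once $k\ge\norm{b}_{C^2}$. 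The price is that the decay $\eta(r)\to 0$, which is what makes the barrier trap the solution at the correct boundary value, is governed by the double integral \eqref{intcond}, $\int_1^\infty f^{n-3}\int_s^\infty f^{1-n}<\infty$, not by $\int^\infty f^{1-n}<\infty$.

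Second, and consequently, your integrability criterion is false. For $n\ge 3$ the condition $\int^\infty f(r)^{-(n-1)}\,dr<\infty$ holds on \emph{every} Cartan-Hadamard manifold (since $f(r)\ge r$ and $n-1\ge 2$), in particular on $\R^n$, where the asymptotic Dirichlet problem for \eqref{mingraph} is certainly not solvable (Bombieri--De Giorgi--Miranda). It is merely non-parabolicity, which is far weaker than what is needed. Your arithmetic also does not give what you claim: $\int^\infty r^{1-n}(\log r)^{-\alpha(n-1)}dr<\infty$ forces nothing on $\alpha$ when $n\ge3$, and even the inequality ``$\alpha(n-1)>1$'' would read $\alpha>1/(n-1)$, which equals $1/2$ only for $n=3$. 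The correct computation uses \eqref{intcond}: with $f(r)\approx r(\log r)^{c}$ the powers of $r$ cancel, $f^{n-3}\int_r^\infty f^{1-n}\approx \tfrac{1}{(n-2)}\,r^{-1}(\log r)^{-2c}$, and convergence requires $2c>1$, i.e.\ $c>1/2$ uniformly in $n\ge 3$ (and $c>1$ for $n=2$), which is precisely where the constants in \eqref{n=2} and \eqref{n>2} come from. Your Sturm-comparison step converting the curvature bound into $f(r)\gtrsim r(\log r)^{c}$ is fine, but it must feed into the double integral, not the single one.
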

The rotationally symmetric $2$-dimensional case was previously considered in \cite{RTgeomdedi}, where 
the solvability of the asymptotic Dirichlet problem was proved under the curvature assumption \eqref{n=2}. 

In Section~\ref{sec4} we consider the existence of bounded non-constant $p$-harmonic functions
and prove the following.	
	\begin{thm}[= Corollary \ref{rotsymplaplacecor}]
	Let $M$ be a rotationally symmetric $n$-dimensional Cartan-Hadamard manifold, $n\ge 3$, whose radial sectional 
	curvatures satisfy the upper bound
   	 \[
     		K(P_x) \le - \frac{1/2 + \ve }{r(x)^2 \log r(x)}.
    	\]
	Then the asymptotic Dirichlet problem for the $p$-Laplace equation \eqref{plaplaceeqn},
	with $p\in (2,n)$, is solvable with any continuous boundary data on $\pinf M$.
	\end{thm}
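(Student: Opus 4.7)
Since $M$ is rotationally symmetric, write the metric in polar coordinates as $g = dr^2 + f(r)^2 g_{\Ss^{n-1}}$, where $f\in C^\infty([0,\infty))$ satisfies $f(0)=0$, $f'(0)=1$, $f>0$ on $(0,\infty)$, and the radial sectional curvature equals $-f''(r)/f(r)$. My plan is to reduce the corollary to the general existence proposition of Section~\ref{sec4} for rotationally symmetric Cartan-Hadamard manifolds, which I expect to yield solvability of the asymptotic Dirichlet problem for the $p$-Laplace equation whenever the integrability condition $\int^\infty f(s)^{-(n-1)/(p-1)}\,ds<\infty$ holds. This integral controls the oscillation of radial $p$-harmonic functions $u(r)=C_1+C_2\int^{r} f(s)^{-(n-1)/(p-1)}\,ds$, which serve as the radial building blocks for barriers at infinity.

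The first step is to translate the curvature upper bound into a lower bound on $f$. Applied to radial $2$-planes the hypothesis gives $f''(r)/f(r) \ge (1/2+\ve)/(r^2 \log r)$ for all sufficiently large $r$. I would then Sturm-compare $f$ with the model function $h_\beta(r)\coloneqq r(\log r)^\beta$; a direct computation yields
\[
\frac{h_\beta''(r)}{h_\beta(r)} \;=\; \frac{\beta}{r^2\log r}\,+\,\frac{\beta(\beta-1)}{r^2(\log r)^2}.
\]
Fixing any $\beta\in(1/2,\,1/2+\ve)$, the right-hand side is strictly smaller than $(1/2+\ve)/(r^2\log r)$ for all $r$ large enough, so a standard comparison argument gives $f(r)\ge c\, r(\log r)^\beta$ for some $c>0$ and all large $r$.

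The bound on $f$ immediately yields the required integrability. Since $p<n$ forces $(n-1)/(p-1)>1$, the tail estimate
\[
\int_{r_0}^\infty f(s)^{-(n-1)/(p-1)}\,ds \;\le\; C\int_{r_0}^\infty \frac{ds}{s^{(n-1)/(p-1)}(\log s)^{\beta(n-1)/(p-1)}}
\]
is finite, and the general existence proposition of Section~\ref{sec4} applied with this warping function $f$ produces the desired solutions.

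The main technical obstacle is the Sturm comparison step: since the perturbing term in $h_\beta''/h_\beta$ decays only logarithmically faster than the leading one, one has to verify that the correction $\beta(\beta-1)/(r^2(\log r)^2)$ is genuinely dominated by the $\ve/(r^2\log r)$ slack coming from the curvature hypothesis. The freedom to choose $\beta$ strictly less than $1/2+\ve$ is precisely what supplies this slack. Beyond this point, the construction of angular barriers at infinity and the Perron-type argument for the quasilinear operator are already encapsulated in the general existence proposition and need not be revisited here.
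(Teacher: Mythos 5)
There is a genuine gap, and it is in the very first step: you reduce the corollary to an existence result whose hypothesis you take to be the single tail integral $\int^\infty f(s)^{-(n-1)/(p-1)}\,ds<\infty$. That is not the hypothesis of the paper's existence theorem (Theorem~\ref{plaplaceexistence}), and it cannot be: the single integral only guarantees that radial $p$-harmonic functions $C_1+C_2\int^r f^{\alpha}$ converge at infinity (essentially $p$-hyperbolicity), but it does not allow one to separate points of $\pinf M$, i.e.\ to build barriers with prescribed \emph{non-constant} angular asymptotics. The correct hypothesis is the double-integral condition \eqref{rotsymplaplaceint}, $\int_1^\infty\bigl(f(s)^{\beta}\int_s^\infty f(t)^{\alpha}\,dt\bigr)ds<\infty$ with $\alpha=-(n-1)/(p-1)$, $\beta=(n-2p+1)/(p-1)$; the inner factor $\int_1^t f^\beta$ enters the barrier $\eta(r)=\int_r^\infty f^\alpha(t)\int_1^t f^\beta(s)\,ds\,dt$ precisely to dominate the angular terms $\Delta^{\Ss}b/f^2$, $|\nabla^{\Ss}b|^2/f^4$ in $\Delta_p(\eta+B)$. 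Your criterion is strictly weaker: for $f(r)=r$ (so $M=\R^n$, $n\ge 3$) one has $\int^\infty s^{-(n-1)/(p-1)}\,ds<\infty$ for all $p\in(2,n)$, yet the asymptotic Dirichlet problem is certainly not solvable on $\R^n$ (global Harnack forces positive $p$-harmonic functions to be constant). So the reduction as stated proves too much and is not valid. The whole point of the threshold $1/2$ in the curvature bound is that the double integral for the model $\phi(r)=r(\log r)^c$ behaves like $\int^\infty s^{-1}(\log s)^{-2c}\,ds$, which converges exactly when $c>1/2$; your single integral sees no threshold at all.

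A second, related problem: even after replacing your condition by \eqref{rotsymplaplaceint}, your plan to verify it from the one-sided comparison $f(r)\ge c\,r(\log r)^{b}$ only works when $\beta=(n-2p+1)/(p-1)\le 0$, i.e.\ $p\ge (n+1)/2$. When $2<p<(n+1)/2$ the exponent $\beta$ is positive, so a lower bound on $f$ bounds $f^\beta$ from \emph{below}, not above, and the integrand is not controlled. This is why the paper does not argue via a pointwise bound on $f$ but instead invokes March's comparison lemma (\cite[Lemma 5]{march}), which compares the full integral functional for two warping functions whose curvatures are ordered, and then evaluates the functional explicitly on the model $\phi(r)=r(\log r)^c$. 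Your Sturm comparison producing $f\gtrsim r(\log r)^{b}$ is correct as far as it goes, but it is aimed at the wrong target. To repair the proof you should (i) state the existence theorem with hypothesis \eqref{rotsymplaplaceint}, and (ii) deduce that hypothesis from the curvature bound via the monotonicity of the double-integral functional under curvature comparison, not via a pointwise lower bound on $f$ alone.
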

We point out that the case $p=2$ reduces to the case of usual harmonic
functions, which were considered under the same curvature assumptions in \cite{march}.
It is also worth noting that our curvature upper bound is optimal in a sense that asymptotically
non-negative sectional curvature would imply a global Harnack's inequality for the $\mathcal{A}$-harmonic
functions and hence also for the $p$-harmonic functions, see e.g. \cite[Example 3.1]{holDuke}.
Also the upper bound of $p$ is optimal for this curvature bound, namely in Theorem \ref{parabthm} we show
that if 
  \begin{equation*}
    K_M(P_x) \ge - \frac{\alpha}{r(x)^2 \log r(x)}
  \end{equation*}
and $p=n$, the manifold $M$ is $p$-parabolic for all $0<\alpha\le1$, and if $p>n$, then $M$ is $p$-parabolic
for all $\alpha>0$. We want to point out that all entire positive $p$-harmonic functions or, more generally,
positive $\mathcal{A}$-harmonic functions (of type $p$) on $M$ must be constant if $M$ is $p$-parabolic.

\section{Preliminaries and definitions}\label{sec-preli}

We begin by giving some definitions that are needed in later sections. For the terminology 
in this section, we mainly follow \cite{grisal}, \cite{litam}, and \cite{holDuke}.

Let $(M,g)$ be a complete smooth Riemannian manifold. If $C\subset M$ is a compact set, then an unbounded component 
of $M\setminus C$ is called an \emph{end} with respect to C. We say that M has finitely many
ends if the number of ends with respect to any compact set has a uniform finite upper bound. 

If $\sigma$ is a smooth positive function on $M$, we define 
a measure $\mu$ by $d\mu = \sigma^2 d\mu_0$, where $\mu_0$ is the Riemannian measure of the metric $g$.
We will use the notation $(M,\mu)$ for the weighted manifold.
The \emph{weighted Laplace operator} $\Delta_\mu$ is a second order differential operator on $M$ 
defined as
    \begin{equation}\label{weightlaplace}
      \Delta_\mu f = \sigma^{-2} \dv (\sigma^2 \nabla f) = \dv_\mu (\nabla f),
    \end{equation}
where $\nabla$ is the gradient and $\dv$ the divergence with respect to the Riemannian metric $g$.
We call $\dv_\mu$ the \emph{weighted divergence}.

\begin{defin}\label{def-notions}
	We say that
	\begin{enumerate}
	\item[(ANSC)] $M$ has asymptotically non-negative sectional curvature if there exists a continuous 
	decreasing function $\lambda \colon [0,\infty) \to [0,\infty)$ such that 
		\[
			\int_0^\infty s\lambda(s) \,ds < \infty,
		\]
	and that $K_M(P_x) \ge -\lambda\bigl(d(o,x)\bigr)$ at any point $x \in M$; 
	
	\item[(EHI)] the weighted manifold $(M,\mu)$ satisfies the elliptic Harnack inequality if there exists a constant $C_H$ such that,
	for any ball $B(x,r)$, any positive weighted harmonic function $u$ in $B(x,2r)$ satisfies
		\[
			\sup_{B(x,r)} u \le C_H \inf_{B(x,r)} u;
		\] 
	\item[(PHI)] the weighted manifold $(M,\mu)$ satisfies the parabolic Harnack inequality if there exists a constant $C_H$ such that,
	for any ball $B(x,r)$, any positive solution $u$ to the weighted heat equation in the cylinder $Q\coloneqq
	(0,t)\times B(x,r)$ with $t=r^2$ satisfies 
		\[
			\sup_{Q^-} u \le C_H \inf_{Q^+} u,
		\]
	where 
		\[
			Q^- = (t/4,t/2) \times B(x,r/2), \quad \text{and} \quad Q^+(3t/4,t) \times B(x,r/2).
		\]
	\end{enumerate}
\end{defin}

Using the previous definitions we can now state the following  main result \cite[Theorem 1.1]{grisal} due Grigor'yan and Saloff-Coste, 
although we do not need it in its full strength. 
 
\begin{thm}\label{grisalmain}
 Let $M$ be a complete non-compact Riemannian manifold having either (a) asymptotically non-negative sectional curvature or (b) non-negative Ricci curvature outside a compact set and finite first Betti number. Then $M$ satisfies (PHI) if and only if it satisfies (EHI). Moreover, (PHI) and (EHI) hold if and only if either $M$ has only one end or $M$ has more that one end and the functions $V$ and $V_i$ satisfy for large enough $r$ the conditions
 $V_i(r)\approx V(r)$ (for all indices $i$) and 
 \[
\int_1^r\frac{s\,ds}{V(s)}\approx \frac{r^2}{V(r)}. 
 \]
\end{thm}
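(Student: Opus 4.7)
The plan is to reduce to the Grigor'yan--Saloff-Coste characterization that, on a complete Riemannian manifold, (PHI) is equivalent to the conjunction of volume doubling (VD) and a scale-invariant $L^2$-Poincar\'e inequality (PI). Granting this characterization, the theorem splits into three pieces: the trivial direction (PHI) $\Rightarrow$ (EHI); the verification of (VD) and (PI) on each end under the curvature assumptions (a) or (b); and the description of when end-wise estimates glue into global (VD)+(PI).

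For the end-wise estimates, under (ANSC) one invokes a Bishop--Gromov comparison against the radially symmetric model with radial curvature $-\lambda(r)$; since $\int_0^\infty s\lambda(s)\,ds<\infty$, the comparison volume grows polynomially up to multiplicative constants, which yields uniform volume doubling on geodesic balls contained in a single end. A lower Ricci bound (implied by (ANSC), and present by hypothesis in case (b)) then gives (PI) on such balls via Buser's inequality. In case (b), the finite first Betti number restricts the topology at infinity enough that these local estimates remain compatible when one tries to patch across the ends.

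In the one-end case the end-wise (VD) and (PI) propagate to the whole manifold, since only finitely many balls meet the compact core and their contribution is absorbed into the constants; hence global (VD)+(PI) and (PHI) follow. In the multi-end case, if two ends carry incomparable volume, then a ball $B(o,r)$ centered near the core has $V(2r)$ dominated by the fatter end while $V(r)$ restricted to the thinner side is much smaller, breaking (VD); this is exactly what the condition $V_i(r)\approx V(r)$ prevents. The integral condition $\int_1^r s\,ds/V(s)\approx r^2/V(r)$ is the standard regularity of volume growth, equivalent in this setting to the mean exit time from $B(o,r)$ being comparable to $r^2$, which is what allows the end-wise Poincar\'e inequality to be assembled into a global one.

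The main obstacle, which I would quote rather than reprove, is the deep implication (EHI)$+$(VD) $\Rightarrow$ (PHI) due to Saloff-Coste: it proceeds by turning (EHI) into Faber--Krahn or capacity estimates, hence into matching two-sided Gaussian heat-kernel bounds, hence into (PHI) via Moser iteration. Necessity of the volume conditions in the multi-end case would be argued by exhibiting, when either $V_i\not\approx V$ or the integral relation fails, an explicit positive harmonic function (built from Green's functions on the separate ends) that violates (EHI) on large balls straddling the core. Combining these pieces yields both (PHI)$\Leftrightarrow$(EHI) and the stated volume characterization.
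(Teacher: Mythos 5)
First, note that the paper does not prove this statement at all: it is quoted verbatim as Theorem 1.1 of Grigor'yan and Saloff-Coste \cite{grisal}, and the authors only sketch the one piece they later use, namely the validity of (EHI) when $M$ has a single end (remote balls satisfy (VD) and (PI) by Bishop--Gromov and Buser, Moser iteration gives a local Harnack inequality on such balls, the ball-covering property (BC) of the connected sets $\partial E(t)$ lets one chain it along spheres with a constant independent of $t$, and the maximum principle then upgrades this to a global (EHI)). So any complete proof you give is necessarily a reconstruction of \cite{grisal}, not of anything contained in this paper.

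As such a reconstruction, your outline has a genuine gap at its central step. You propose to quote ``(EHI)$+$(VD) $\Rightarrow$ (PHI)'' as a known theorem, passing through Faber--Krahn inequalities and two-sided Gaussian heat-kernel bounds. No such general implication was available: two-sided Gaussian bounds are equivalent to (PHI), hence to global (VD)$+$(PI), and whether (EHI) (even with volume doubling) forces (PHI) was precisely the open stability problem for the elliptic Harnack inequality; in general (EHI) does not imply (PHI). In \cite{grisal} the equivalence (EHI) $\Leftrightarrow$ (PHI) under hypotheses (a) or (b) is obtained the other way around: both properties are shown to be equivalent to the same set of geometric conditions (remote (VD)$+$(PI), connectedness of the boundaries of the ends, and the stated volume comparability and volume-regularity conditions), and the hard implication is that these conditions imply (PHI), proved by a delicate gluing of heat-kernel and Harnack estimates across the central part --- not by upgrading (EHI) directly. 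Relatedly, your one-end step, that end-wise (VD) and (PI) ``propagate to the whole manifold since only finitely many balls meet the compact core,'' does not address the actual difficulty: the problematic balls are the anchored balls $B(o,r)$ with $r$ large, every one of which contains the core, and doubling and Poincar\'e for them must be extracted from the structure at infinity (connected boundary spheres, ball covering, regular volume growth) rather than absorbed into constants. The remaining ingredients you list --- Bishop--Gromov against the $-\lambda$ model under (ANSC), Buser's inequality for (PI) on remote balls, and the necessity of $V_i\approx V$ for doubling of central balls in the multi-ended case --- are correct in spirit and consistent with what \cite{grisal} actually does.
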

Above $V(r)=\mu\bigl(B(o,r)\bigr)$ and $V_i(r)=\mu\bigr(B(o,r)\cap E_i\bigr)$ for an end $E_i$.

We will briefly sketch the rather well-known proof of the validity of (EHI) in the case where $M$ has only one end. For that purpose, we need additional 
definitions.
\begin{defin}
 We say that
 \begin{enumerate}
	\item[(VD)] a family $\mathcal{F}$ of balls in $(M,\mu)$ satisfies the volume doubling property if there exists a 
	constant $C_D$ such that for any ball $B(x,r)\in\mathcal{F}$ we have
	  \[
	    \mu \big(B(x,r) \big) \le C_D \mu\big( B(x,r/2) \big);
	  \]	
	\item[(PI)] a family $\mathcal{F}$ of balls in $(M,\mu)$ satisfies the Poincar\'e inequality if there exists a constant $C_P$ such that
	for any ball $B(x,r)\in\mathcal{F}$ and any $f \in C^1(B(x,r))$ we have
	  \[
	    \inf_{\xi\in\R} \int_{B(x,r)} (f-\xi)^2 \, d\mu \le C_P r^2 \int_{B(x,r)} |\nabla f|^2 \, d\mu;
	  \]
	\item[(BC)] a set $A\subset\partial B(o,t)$ has a ball-covering property if, for each $0<\varepsilon<1$, $A$ can be covered by 
	  $k$ balls of radius $\varepsilon t$ with centres in $A$, where $k$ depends on $\varepsilon$ and possibly on some other parameters, 
	  but is independent of $t$.
\end{enumerate}
\end{defin}

From the curvature assumptions ((a) or (b)) in Theorem~\ref{grisalmain} it follows that (VD) and (PI) hold for all ``remote''
balls, that is for balls $B(x,r)$, where $r\le\frac{\varepsilon}{2}d(o,x)$ and $\varepsilon\in (0,1]$ is a suitable remote parameter. The familiar Moser iteration then yields local (EHI) for such remote balls. Furthermore, if $E$ is an end of $M$ and $E(t)$ denotes the unbounded component of $E\setminus\bar{B}(o,t)$, then set $\partial E(t)$ is connected and has the ball-covering property (BC) for all sufficiently large $t$. Iterating the local (EHI)
$k$ times, one obtains Harnack's inequality 
\[
\sup_{\partial E(t)}u\le C\inf_{\partial E(t)}u
\] 
with $C$ independent of $t$. Finally, if $M$ has only one end, the global (EHI) follows from the maximum principle. We will give a bit more details in 
Section~\ref{sec-nonexist} and refer to 
\cite{grisal}, \cite{litam}, and \cite{holDuke} for more details, and to \cite{Ab}, \cite{Cai}, \cite{kas}, \cite{litam}, and \cite{liu}
for the connectivity and the covering properties mentioned above.

\section{Non-existence for minimal graph equation}\label{sec-nonexist}

In order to prove Theorem \ref{bernstein} we need a uniform gradient estimate for the 
solutions of the minimal graph equation \eqref{mingraph}. Our proof follows closely the 
computations in \cite{DL} and \cite{RSS}. 
It is worth pointing out that the solutions in Theorem~\ref{bernstein} need not be bounded and that we do not assume the Ricci curvature being non-negative, therefore the gradient estimates in \cite[Theorem 1.1]{Spruck} and \cite[Theorem 4.1]{RSS} are not available as such in our setting.
We begin by introducing some notation.

We assume that $M$ is a complete non-compact $n$-dimensional Riemannian manifold whose Riemannian metric is given by
$ds^2=\sigma_{ij}dx^i dx^j$ in local coordinates.
Let $u\colon M \to \R$ be a solution to the minimal graph equation, i.e.
    \[
      \dv \frac{\nabla u}{\sqrt{1+|\nabla u|^2}} = 0,
    \]
where the gradient and divergence are taken with respect to the Riemannian metric of $M$. We denote by
    \[
      S=\big\{(x,u(x)) \colon x \in M\big\}
    \]
the graph of $u$ in the product manifold $M\times\R$ and by 
\begin{equation*}\label{unitnorm}
      N= \frac{-u^j\p_j + \p_t}{W}
    \end{equation*}
the upward pointing unit normal to the graph of $u$ expressed in terms of a local coordinate frame $\{\partial_1,\ldots,\partial_n\}$ and 
$\partial_t=e_{n+1}$. Here $W = \sqrt{1+|\nabla u|^2}$ and $u^i = \sigma^{ij} D_j u$, $D_j$ being the covariant derivative on $M$.
The components of the induced metric on the graph are given by $g_{ij}=\sigma_{ij} + u_i u_j$ with inverse     
\begin{equation*}
      g^{ij} = \sigma^{ij} - \frac{u^i u^j}{W^2}.
    \end{equation*}
We denote by $\nabla^S$ and $\Delta^S$ the gradient and, respectively, the Laplace-Beltrami operator on the graph $S$. 
For the Laplacian on the graph we have the Bochner-type
formula (see e.g. \cite{fornari})
     \begin{equation}\label{bochner}
      \Delta^S\ang{e_{n+1},N} = -\big( |A|^2 + \overline{\Ric}(N,N) \big)\ang{e_{n+1},N},
     \end{equation}
where $|A|$ is the norm of the second fundamental form and $\overline{\Ric}$ is the Ricci curvature of $M\times\R$. 
From \eqref{bochner} we obtain
      \begin{equation}\label{bochW}
       \Delta^S W = 2 \frac{|\nabla^S W|^2}{W} + \big( |A|^2 + \overline{\Ric}(N,N) \big) W.
      \end{equation}
Here and in what follows we extend, without further notice, functions $h$ defined on $M$ to $M\times\R$ by setting $h(x,t)=h(x)$.
The Laplace-Beltrami operator of the graph can be expressed in local coordinates as
    \[
      \Delta^S = g^{ij}D_i D_j.
    \]
   
Now we are ready to prove the following gradient estimate.
  \begin{prop}\label{gradest}
Assume that the sectional curvature of $M$ 
has a lower bound $K(P_x)\ge -K_0^2$ for all $x\in B(p,R) $ for some constant $K_0=K_0(p,R)\ge 0$.
Let $u$ be a positive solution to the minimal graph equation in $B(p,R) \subset M$. Then 
      \begin{align}\label{loc-grad-est}
       & |\nabla u(p)| \\
       &\quad \le \left(\frac{2}{\sqrt{3}}+\frac{32 u(p)}{R}\right)
       \left( \exp\left[64 u(p)^2 \left( \frac{2\psi(R)}{R^2}+\sqrt{\frac{4\psi(R)^2}{R^4} 
       + \frac{(n-1)K_0^2}{64 u(p)^2}} \right)\right] + 1\right),\nonumber
             \end{align}
    where $\psi(R) = (n-1)K_0 R \coth(K_0R) + 1$ if $K_0>0$ and $\psi(R)=n$ if $K_0=0$. 
  \end{prop}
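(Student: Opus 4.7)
My plan is to follow the maximum-principle strategy on the graph $S$ developed in \cite{DL} and \cite{RSS}. The three ingredients are: the Bochner identity \eqref{bochW}; the fact that, since $S$ is minimal, the height function $u$ restricted to $S$ satisfies $\Delta^S u=0$ with $|\nabla^S u|^2=1-W^{-2}\le 1$; and the Hessian/Laplace comparison on $M$ under the lower curvature bound $K\ge -K_0^2$.

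First, I would convert \eqref{bochW} into a differential inequality for $v:=\log W$. A short computation using $|A|^2\ge 0$ and $\overline{\Ric}(N,N)\ge -(n-1)K_0^2$ (the $\R$-factor of $M\times\R$ contributes nothing to Ricci and the tangential component of $N$ has length $\le 1$) gives
\begin{equation*}
\Delta^S v=\frac{|\nabla^S W|^2}{W^2}+|A|^2+\overline{\Ric}(N,N)\ge |\nabla^S v|^2-(n-1)K_0^2.
\end{equation*}
With $r=d(p,\cdot)$ extended vertically, Hessian comparison yields $\Delta r\le (n-1)K_0\coth(K_0 r)$ on $B(p,R)$, and the identity $\Delta^S r=\sigma^{ij}D_iD_j r-(u^iu^j/W^2)D_iD_j r$ together with $|\nabla r|\le 1$ produces an $S$-Laplacian bound in which the quantity $\psi(R)=(n-1)K_0R\coth(K_0R)+1$ appears.

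Next I would apply the maximum principle to an auxiliary function $\Phi=\chi(r)\,\varphi(u)\,v$ (or a multiplicative analogue such as $\chi(r)\,e^{Au}W$), where $\chi$ is a smooth spatial cutoff equal to $1$ at $p$ and vanishing on $\partial B(p,R)$, and $\varphi$ is a positive function of $u$. The role of $\varphi$ is twofold: to keep $\Phi$ admissible (since $u$ need not be bounded on $B(p,R)$) and, because $\Delta^S u=0$, to contribute a nonnegative term $\varphi''(u)|\nabla^S u|^2$ that can be used to absorb $(n-1)K_0^2$. At an interior maximum $q$ (the only nontrivial case) one has $\nabla^S \Phi(q)=0$ and $\Delta^S \Phi(q)\le 0$. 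Eliminating $\nabla^S v(q)$ via the first relation and substituting the Bochner inequality together with the comparison bound for $\Delta^S r$ into the second gives a quadratic inequality of the form
\begin{equation*}
A\,v(q)^2-B\,v(q)-C\le 0,
\end{equation*}
with $A$ involving $R^{-2}$ and $u(p)^{-2}$, $B$ involving $\psi(R)/R^2$, and $C$ involving $(n-1)K_0^2$, the precise coefficients depending on the chosen $\chi$ and $\varphi$. Solving by the quadratic formula, chaining $\Phi(p)\le \Phi(q)$, and translating from $v=\log W$ back to $|\nabla u|\le W$ yields \eqref{loc-grad-est}, with the square-root term inside the exponential arising as the discriminant of the quadratic.

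The main obstacle is the bookkeeping of constants. The analytic ingredients (Bochner, Hessian comparison, maximum principle) are classical; the subtlety is to calibrate $\chi$ (for instance a polynomial cutoff in $(R-r)/R$) and $\varphi$ (a positive convex function of $u$ whose relevant scale is $u(p)^{-1}$) so that the coefficients of the quadratic are exactly those producing the prefactor $2/\sqrt{3}+32u(p)/R$ and the discriminant $2\psi(R)/R^2+\sqrt{4\psi(R)^2/R^4+(n-1)K_0^2/(64u(p)^2)}$. In particular, the sectional curvature bound enters only through $\psi(R)$ and the additive term $(n-1)K_0^2/(64u(p)^2)$ under the square root, reflecting how the Ricci contribution $(n-1)K_0^2$ is traded against a term of size $u(p)^{-2}|\nabla^S u|^2$ at the maximum point.
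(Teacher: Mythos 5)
Your plan is essentially the paper's own argument (which itself follows \cite{RSS} and \cite{DL}): the paper applies the maximum principle to the auxiliary function $h=(e^{K\varphi}-1)W$ with $\varphi=\bigl(1-u/(4u(p))-d(\cdot,p)^2/R^2\bigr)^+$, uses the Bochner formula \eqref{bochW}, the bound $\overline{\Ric}(N,N)\ge-(n-1)K_0^2$, and the Hessian comparison exactly as you describe, and the square root in the exponent arises from choosing $K$ as the positive root of $K^2/(16u(p)^2\alpha^2)-2\psi(R)K/R^2-(n-1)K_0^2=0$ (with $\alpha=4$) rather than from a quadratic in $\log W(q)$, while the dichotomy you hint at (the $u$-part of the gradient dominating only when $W(q)$ exceeds the threshold $\max\{2/\sqrt{3},\,32u(p)/R\}$) is precisely what produces the prefactor. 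The only point you pass over is the case where the maximum of the test function lies on the cut locus of $p$, where $d(\cdot,p)$ is not smooth (the paper restricts to $B(p,R)\setminus C(p)$ and refers to \cite{RSS} for the remaining case); the constant bookkeeping you defer is genuine but routine.
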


    \begin{proof}
    Define a function $h = \eta W$, where $\eta(x) = g(\varphi(x))$ with $g(t) = e^{Kt}-1$,  
    	\[
		\varphi(x) = \left( 1- \frac{u(x)}{4u(p)} - \frac{d(x,p)^2}{R^2} \right)^+,
	\]
	and a constant $K$ that will be specified in \eqref{Kchoose}.
    Denote by $C(p)$ the cut-locus of $p$ and let $U(p) = B(p,R) \setminus C(p)$. Then it is well known
    that $d(x,p)$ is smooth in the open set $U(p)$. We assume that the function $h$ attains its maximum 
    at a point $q\in U(p)$, and for the case
    $q\not\in U(p)$ we refer to \cite{RSS}.
    
    In all the following, the computations will be done at the maximum point $q$ of $h$. We have
	\begin{equation}\label{grad-h}
	  \nabla^S h = \eta \nabla^S W + W\nabla^S \eta = 0
	\end{equation}
    and since the Hessian of $h$ is non-positive, we obtain, using \eqref{grad-h} and \eqref{bochW},
      \begin{align}\label{Laplacehestim}
       0\ge \Delta^S h &= W \Delta^S \eta + 2\ang{\nabla^S\eta,\nabla^S W} + \eta \Delta^S W \nonumber \\
	&= W \Delta^S \eta + \left( \Delta^S W - \frac{2}{W} |\nabla^S W|^2 \right) \eta  \\
	&= W \big( \Delta^S \eta + (|A|^2 + \overline{\Ric}(N,N))\eta \big),\nonumber
      \end{align}
    where $\overline{\Ric}$ is the Ricci curvature of $M\times\R$. Since the Ricci curvature of $M\times\R $ in 
$B (p,R)\times\R $ has a lower bound $\overline {\Ric}(N,N)\ge -(n-1)K_0^2$, we obtain from \eqref{Laplacehestim} that 
    $\Delta^S \eta \le (n-1)K_0^2\eta$ 
    and hence, from the definition of $\eta$, we get
	\begin{equation}\label{laplacephi}
	 \Delta^S \varphi + K|\nabla^S \varphi|^2 \le \frac{(n-1)K_0^2}{K}.
	\end{equation}
Next we want to estimate $\Delta^S \varphi$ from below by using the lower bound for the sectional curvature
and the Hessian comparison theorem. For this, let $\{e_i\}$ be a local orthonormal
frame on $S$. Since $u$ is a solution to the minimal graph equation, we have $\Delta^S u =0$ and
\[
\sum_{i=1}^n \ang{\bar\nabla_{e_i}N,e_i} =0,
\]
where $\bar{\nabla}$ denotes the Riemannian connection of the ambient space $M\times\R$.
Hence
\begin{align*}
	\Delta^S \varphi &= \Delta^S\left( -\frac{d^2}{R^2} \right) =- \frac{1}{R^2} \sum_{i=1}^n 
	\ang{\nabla^S_{e_i}\nabla^S d^2,e_i} \\
	&= - \frac{1}{R^2} \sum_{i=1}^n \ang{\bar\nabla_{e_i}\bigl(\bar\nabla d^2-\ang{\bar\nabla d^2,N}N\bigr),e_i} \\
	&= - \frac{1}{R^2} \sum_{i=1}^n \ang{\bar\nabla_{e_i}\bar\nabla d^2,e_i} \\
	&= - \frac{2d}{R^2} \sum_{i=1}^n \ang{\bar\nabla_{e_i}\bar\nabla d,e_i}
	- \frac{2}{R^2} \sum_{i=1}^n (e_i d)\ang{\bar\nabla d,e_i} \\
	&\ge - \frac{2d}{R^2} \sum_{i=1}^n \ang{\bar\nabla_{e_i}\bar\nabla d,e_i} -\frac{2}{R^2}.
	\end{align*}
Now decompose $e_i$ as $e_i = (e_i - \ang{\p_t,e_i}\p_t) + \ang{\p_t,e_i}\p_t \eqqcolon \hat{e}_i + \ang{\p_t,e_i}\p_t.$ 	
Then
	\begin{align*}
	\ang{\bar\nabla_{e_i}\bar\nabla d,e_i} &= \ang{\bar\nabla_{\hat e_i 
		+ \ang{\p_t,e_i}\p_t}\bar\nabla d,\hat e_i + \ang{\p_t,e_i}\p_t} \\
	&= \ang{\bar\nabla_{\hat e_i}\bar\nabla d,\hat e_i} 
	= \Hess d(\hat e_i, \hat e_i)
	\end{align*}
and by the Hessian comparison (e.g. \cite[Theorem A]{GW}) we have
\[
\Hess d (\hat{e}_i,\hat {e}_i)\le \frac {f^\prime(d)}{f (d)}\bigl( |\hat {e}_i|^2-\langle\nabla d,\hat{e}_i\rangle\bigr),
\]
where $f(t)=K_0^{-1}\sinh (K_0 t) $ if $K_0>0$ and $f (t)=t $ if $K_0=0$. Choosing $\hat{e}_n $ parallel to 
$\nabla d $ at $q $ we have
\[
\Hess d (\hat e_i,\hat e_i)\le
\begin{cases}
0, &\text{ if }i=n;\\
\frac{f^\prime(d)}{f(d)}, &\text{ if }i\in\{1,\ldots,n-1\}.
\end{cases}
\]
Hence
\begin{align*}
\sum_{i=1}^n \ang{\bar\nabla_{e_i}\bar\nabla d,e_i} & =\sum_{i=1}^n \Hess d(\hat e_i, \hat e_i)\\
&
\le\begin{cases}
(n-1)K_0\coth(K_0 d), &\text{ if } K_0>0;\\
\frac{n-1}{d}, &\text{ if } K_0=0.
\end{cases}
\end{align*}
Therefore
\begin{equation}\label{comparison}
\Delta^S \varphi\ge - \frac{2d}{R^2} \sum_{i=1}^n \ang{\bar\nabla_{e_i}\bar\nabla d,e_i}-\frac{2}{R^2}
 \ge -\frac{2 \psi(R)}{R^2},
\end{equation}	
where $\psi$ is as in the claim.

A straightforward computation gives also 
 	\begin{align*}
	|\nabla^S \varphi|^2 = g^{ij} D_i \varphi D_j \varphi &= 
	 \frac{|\nabla u|^2}{16 u(p)^2 W^2} + \frac{4d(x,p)^2}{R^4}\left(1- 
	 	\ang{\frac{\nabla u}{W},\nabla d(x,p)}^2 \right) \\
	 &\qquad +\frac{d(x,p)}{u(p)R^2W^2}\ang{\nabla u,\nabla d(x,p)} \\	
	&\ge \frac{|\nabla u|^2}{16 u(p)^2 W^2} + \frac{4d(x,p)^2}{R^4} \left( 1-\frac{|\nabla u|^2}{W^2} \right)
		-\frac{d(x,p)|\nabla u|}{u(p)R^2W^2} \\
	&= \left( \frac{|\nabla u|}{4u(p) W} - \frac{2d(x,p)}{R^2 W} \right)^2.
	\end{align*}   
Note that 
	\begin{equation}\label{estimToAlpha}
	\left( \frac{|\nabla u|}{4u(p) W} - \frac{2d(x,p)}{R^2 W} \right)^2 > \frac{1}{16 u(p)^2 \alpha^2}
	\end{equation}
with some constant $\alpha>2$ if and only if
	\[
	\left( \frac{|\nabla u|}{4u(p)W} -\frac{2d(x,p)}{R^2 W} -\frac{1}{4u(p)\alpha} \right) 
	\left( \frac{|\nabla u|}{4u(p)W} -\frac{2d(x,p)}{R^2 W} + \frac{1}{4u(p)\alpha} \right) > 0.
	\]
This is clearly true if the first factor is positive, i.e. if
	\[
	\alpha |\nabla u| - W > \frac{\alpha 8 d(x,p) u(p)}{R^2}.
	\]
On the other hand,
	\[
	\alpha |\nabla u| - W > W
	\]
if 
	\[
	W^2 > \frac{\alpha^2}{\alpha^2-4}.
	\]
Therefore assuming
	\begin{equation*}
	W(q) > \max\left\{ \frac{\alpha}{\sqrt{\alpha^2 -4}}, \frac{\alpha 8 u(p)}{R} \right\}
	\end{equation*}
we see that also \eqref{estimToAlpha} holds and thus 
 we have the estimate 
 \begin{equation}\label{gradphiest}
  |\nabla^S \varphi|^2 > \frac{1}{16u(p)^2\alpha^2}.
 \end{equation}   
    
 Plugging \eqref{comparison} and \eqref{gradphiest} into \eqref{laplacephi} we obtain
 	\[
	-\frac{2\psi(R)}{R^2} + \frac{K}{16 u(p)^2 \alpha^2} < \frac{(n-1)K_0^2}{K}.
	\]
  But choosing 
  	\begin{equation}\label{Kchoose}
 K =  8u(p)^2\alpha^2 \left( \frac{2\psi(R)}{R^2} + \sqrt{\frac{4\psi(R)^2}{R^4} 
	+\frac{(n-1)K_0^2}{4u(p)^2\alpha^2}} \right) 	
  	\end{equation}
	with $\alpha = 4$
  we get a contradiction and hence we must have
	\[
 	W(q) \le \max\left\{ \frac{2}{\sqrt{3}}, \frac{32 u(p)}{R} \right\}.
 	\]
  This implies 
  	\begin{align*}
	h(p) = (e^{K\varphi(p)} -1) W(p) &= (e^{\frac{3}{4}K} -1)W(p) \le h(q) \\
		&\le (e^K - 1) 
	\max\left\{ \frac{2}{\sqrt{3}}, \frac{32 u(p)}{R} \right\} \\
	&\le (e^K-1)\left(\frac{2}{\sqrt{3}}+\frac{32 u(p)}{R}\right)
	\end{align*}
  and noting that $e^{\frac{3}{4}K} -1 \ge e^{\frac{K}{2}} -1$
  we obtain the desired estimate 
  \[
  |\nabla u(p)| \le (e^{\frac{K}{2}}+1)\left(\frac{2}{\sqrt{3}}+\frac{32 u(p)}{R}\right).
  \]
      \end{proof}
  Next we apply Proposition~\ref{gradest} to the setting of Theorem~\ref{bernstein} to obtain a uniform gradient estimate.
  \begin{cor}\label{cor-glob-grad-est}
  Let $M$ be a complete Riemannian manifold with asymptotically non-negative sectional curvature. 
  If $u\colon M\to \R$ is a solution to the minimal graph equation \eqref{mingraph} that is bounded from below and has 
    at most linear growth, then there exist positive constants $C$ and $R_0$ such that
    \begin{equation}\label{glob-grad-est}
    |\nabla u(x)|\le C
    \end{equation}
    for all $x\in M\setminus B(o,R_0)$.
  \end{cor}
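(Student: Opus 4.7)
The plan is to apply the local gradient estimate of Proposition~\ref{gradest} at each point $p$ with $d(o,p)$ large, using a radius $R$ comparable to $d(o,p)$, and to check that the (ANSC) integrability together with the linear growth of $u$ forces every quantity entering \eqref{loc-grad-est} to remain bounded as $d(o,p)\to\infty$.

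Since $u$ is bounded below, I first replace $u$ by $v\coloneqq u-\inf_M u+\ve$ for a small $\ve>0$ (to be sent $\dto 0$ at the end). Then $v$ is a positive solution of \eqref{mingraph} with $|\nabla v|=|\nabla u|$ and with at most linear growth, say $v(x)\le C_1\bigl(1+d(o,x)\bigr)$. Fix $p\in M$ with $d(o,p)$ large and take $R=d(o,p)/2$. For every $x\in B(p,R)$ the triangle inequality gives $d(o,x)\ge R$, so by (ANSC) and the monotonicity of $\lambda$,
\[
K(P_x)\ge -\lambda\bigl(d(o,x)\bigr)\ge -\lambda(R) \quad \text{on } B(p,R),
\]
so Proposition~\ref{gradest} applies with $K_0^2=\lambda(R)$. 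Linear growth gives $v(p)/R\le 2C_1+C_1/R$, so the prefactor $2/\sqrt{3}+32v(p)/R$ in \eqref{loc-grad-est} is already uniformly bounded.

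For the exponent I use $\sqrt{a+b}\le\sqrt{a}+\sqrt{b}$ to split
\[
64\,v(p)^2\!\left(\frac{2\psi(R)}{R^2}+\sqrt{\frac{4\psi(R)^2}{R^4}+\frac{(n-1)K_0^2}{64\,v(p)^2}}\right)\le \frac{256\,v(p)^2\psi(R)}{R^2}+8\sqrt{n-1}\,v(p)\,K_0.
\]
Using $v(p)\le 2C_1R$, the first summand is bounded by a constant multiple of $\psi(R)$ and the second by a constant multiple of $R\,K_0=R\sqrt{\lambda(R)}$. Both are controlled by the (ANSC) integrability: since $\lambda$ is decreasing,
\[
\tfrac{R^{2}}{4}\lambda(R)\le\int_{R/2}^{R} s\lambda(s)\,ds\to 0\quad\text{as } R\to\infty,
\]
so $R^2\lambda(R)\to 0$, whence $RK_0\to 0$ and $K_0R\coth(K_0R)\to 1$, giving $\psi(R)\to n$. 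Plugging these facts back into \eqref{loc-grad-est} bounds the whole exponential by a universal constant, and letting $\ve\dto 0$ yields $|\nabla u(p)|\le C$ for all $p$ with $d(o,p)\ge R_0$.

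The main obstacle is precisely the asymptotic control of the exponential factor: a priori, $v(p)^2K_0^2$ and $v(p)^2\psi(R)/R^2$ could blow up as $d(o,p)\to\infty$, and any polynomial blow-up in the exponent would destroy the estimate. The integrability hypothesis $\int_0^\infty s\lambda(s)\,ds<\infty$ in the definition of (ANSC) is used exactly to kill both terms via the elementary bound $R^2\lambda(R)\le 4\int_{R/2}^R s\lambda(s)\,ds\to 0$; without this integrability, $\psi(R)$ and $RK_0$ need not be bounded and the argument breaks down.
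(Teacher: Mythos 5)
Your argument is correct and is essentially the paper's own proof: both apply Proposition~\ref{gradest} at $p$ with $R=d(o,p)/2$ after shifting $u$ to be positive, use the monotonicity and integrability of $\lambda$ in (ANSC) to get $K_0^2\le c/R^2$ on $B(p,R)$, and then check that $u(p)/R$, $\psi(R)$, and $u(p)^2K_0^2$ (hence the whole exponent in \eqref{loc-grad-est}) stay bounded. The only cosmetic difference is that you derive the quadratic decay $R^2\lambda(R)\to 0$ explicitly and split the square root, whereas the paper states the bound $K(P_x)\ge -c/d(x,o)^2$ directly; the $\ve\dto 0$ step is harmless but unnecessary since taking $v=u-\inf_M u+1$ already suffices.
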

  \begin{proof}
We may assume, without loss of generality, that $u>0$. Then the assumptions on the growth of $u$ and the (ANSC) condition  of $M$ imply that
  there exist constants $c$ and $R_0$ such that
\begin{equation}\label{ubound}
u(x)\le c\,d(x,o)
\end{equation}
and
\[
K(P_x)\ge -\frac{c}{d(x,o)^2}
\]
for all $x\in M\setminus B(o,R_0/2)$.  
Next we apply 
Proposition~\ref{gradest} to points $p\in M\setminus B(o,R_0)$ with the radius $R=d(p,o)/2\ge R_0/2$. Noticing that 
$B(p,R)\subset M\setminus B(o,R)\subset M\setminus B(o,R_0/2)$, we obtain an upper bound
\begin{equation}\label{Kbound}
K_0^2=K_0(p,R)^2\le c^2/R^2
\end{equation} 
for the constant $K_0$ in the sectional curvature bound in $B(p,R)$. It follows now from \eqref{ubound} and \eqref{Kbound} 
that 
\begin{align*}
\frac{u(p)}{R}&\le 2c,\\
\psi(R)&\le (n-1)c\,\coth(c)+1,\\
\noalign{and}
u(p)^2K_0^2&\le 4c^3.
\end{align*}
Plugging these upper bounds into \eqref{loc-grad-est} gives the estimate \eqref{glob-grad-est}. 
  \end{proof}

   We are now ready to prove the Theorem \ref{bernstein}.
  
  \begin{proof}[Proof of Theorem \ref{bernstein}]
    Denoting 
    \[
      A(x) = \frac{1}{\sqrt{1+|\nabla u|^2}}
    \]
  we see that 
    \[
      \dv \frac{\nabla u}{\sqrt{1+|\nabla u|^2}} = \dv \big( A(x)\nabla u \big) =0
    \]
  is equivalent to 
    \[
      \frac{1}{A(x)} \dv \big( A(x)\nabla u \big) =0.
    \]
  Now we can interpret the minimal graph equation as a weighted Laplace equation $\Delta_\sigma$ with the weight 
      \[
	\sigma = \sqrt{A}.
      \]
  Note that due to the uniform gradient estimate \eqref{glob-grad-est} of Corollary~\ref{cor-glob-grad-est} there exists a 
  constant $c>0$ such that $c\le  \sigma \le 1$ in $M\setminus B(o,R_0)$ and hence the operator $\Delta_\sigma$ is 
  uniformly elliptic there. 
On the other hand, the assumption (ANSC) implies that the (unweighted) volume doubling condition (VD) and the (unweighted) Poincar\'e inequality
(PI) hold  for balls inside $B(p,R)$, with $R=d(o,p)/2\ge R_0$.
More precisely, the Ricci curvature of $M$ satisfies 
\begin{equation}\label{ric-decay}
\Ric(x)\ge -\frac{(n-1)K^2}{d(x,o)^2}
\end{equation}
for some constant $K\ge 0$ if $d(x,o)\ge R_0$ and $R_0$ is large enough. Then for each $x\in B(p,R)$, with $d(o,p)\ge 2R\ge 2R_0$, we have
$\Ric(x)\ge -(n-1)\tilde{K}^2$, where $\tilde{K}=KR^{-1}$. Then the well-known Bishop-Gromov comparison theorem (see \cite[5.3.bis Lemma]{Gro})
implies that
\begin{equation}\label{BGdoubl}
\frac{\mu_0\bigl(B(x,2r)\bigr)}{\mu_0\bigl(B(x,r)\bigr)}\le 2^n\exp\bigl(2r(n-1)\tilde{K}\bigr)\le 2^n\exp\bigl((n-1)K\bigr)
\end{equation} 
for all balls $B(x,2r)\subset B(p,R)\subset M\setminus B(o,R_0)$. On the other hand, it follows from Buser’s isoperimetric inequality 
\cite{buser} that
\begin{equation}\label{B-PI}
\int_{B(x,r)}|f-f_{B(x,r)}|\,d\mu_0\le r\exp\bigl(c_n(1+\tilde{K}r)\bigr)\int_B |\nabla f|\,d\mu_0\le 
cr\int_B|\nabla f|\,d\mu_0,
\end{equation}
for every $f\in C^1\bigl(B(x,r)\bigr)$, 
where 
\[
f_{B(x,r)}=\frac{1}{\mu_0\bigl(B(x,r)\bigr)}\int_{B(x,r)}f\,d\mu_0
\]
and the constant $c$ also has an upper bound that depends only on $n$ and $K$.
 Since $\Delta_\sigma$ is uniformly elliptic in $M\setminus B(o,R_0)$, the Moser iteration method gives a local Harnack's inequality
\begin{equation}\label{lochar}
\sup_{B(p,R/2)}u\le c\,\inf_{B(p,R/2)}u
\end{equation}
for all $p\in\partial B(o,2R)$, with the constant $c$ independent of $p$ and $R$. Since we assume that $M$ has only one end, 
the boundary of the unbounded component of $M\setminus \bar{B}(o,2R)$ is connected for all sufficiently large $R$ and can be covered by $k$ balls $B(x,R/2)$, with $x\in\partial B(o,2R)$ and $k$ independent of $R$; see \cite{Ab} and \cite{kas}.
Iterating the Harnack inequality \eqref{lochar} $k$ times and applying the maximum principle  we obtain
\begin{equation}\label{globhar}
\sup_{B(o,2R)}u\le C\inf_{B(o,2R)}u.
\end{equation}
Finally, we may suppose, without loss of generality, that $\inf_M u=0$. Letting then $R\to\infty$, we get
  \[
\sup_{B(o,2R)}u\le C\inf_{B(o,2R)}u\to 0
\]
  as $R\to\infty$, and therefore $u$ must be constant.
\end{proof}
 
\begin{proof}[Proof of Corollary \ref{2nd-cor}] 
In the proof below, the constants $c,\ C, C_0, \Lambda$, and $\kappa$ depend only on $n$ and on the function
$\lambda$ in the (ANSC) assumption.

We may assume that $u(o)=0$. 
Suppose first that $u\colon M\to\R$ is a solution to the minimal graph equation \eqref{mingraph} such that
\begin{equation}\label{alkuehto}
\lim_{d(x,o)\to\infty}\frac{|u(x)|}{d(x,o)}=0.
\end{equation}
Then there exists a sufficiently large $R_0$ such that 
$|u(x)|\le d(x,o)$ for all $x\in M\setminus B(o,R_0/2)$ and that \eqref{Kbound} holds, i.e. $K_0^2=K_0(p,R)^2\le c^2/R^2$
for all $p\in M\setminus B(o,R_0)$ and $R=d(p,o)/2\ge R_0/2$.
Denote
\[
M(t)=\sup_{B(o,t)}u\quad\text{and}\quad
m(t)=\inf_{B(o,t)}u
\]
for $t>0$. Then $u-m(2t)$ is a positive solution in $B(o,2t)$ and, moreover, $u(x)-m(2t)\le 4t$ for all $x\in \partial B(o,3t/2)$
and $t\ge R_0$. Applying  
Corollary~\ref{cor-glob-grad-est} 
to $u-m(2t)$ in balls $B(x,t/2)$, where $x\in\partial B(o,3t/2)$ and $t\ge R_0$,
we obtain a uniform gradient bound
\[
|\nabla u(x)|\le C
\]
for all $x\in M\setminus B(o,3R_0/2)$. Therefore, we may apply the Harnack inequality \eqref{globhar}
to functions $u-m(2t)$, for all sufficiently large $t$, to obtain
\begin{equation}\label{har-appl}
M(t)-m(2t)\le C_0\bigl(m(t)-m(2t)\bigr).
\end{equation}
Then we proceed as in the proof of the H\"older continuity estimate for $\mathcal{A}$-harmonic functions in \cite[6.6. Theorem]{HKM} to obtain 
\begin{equation}\label{(6.9)}
M(t)-m(t)\le\Lambda\bigl(M(2t)-m(2t)\bigr),
\end{equation}
where $\Lambda=(C_0-1)/C_0$. For reader's convenience we give the short proof of  
\eqref{(6.9)}. To obtain \eqref{(6.9)} suppose first that 
\begin{equation}\label{c0inv}
m(t)-m(2t)\le C_0^{-1}\bigl(M(2t)-m(2t)\bigr).
\end{equation}
Then 
\begin{align*}
M(t)-m(t) & = M(t)-m(2t)+m(2t)-m(t)\\
&\le (C_0-1)\bigl(m(t)-m(2t)\bigr)\\
&\le \Lambda\bigl(M(2t)-m(2t)\bigr)
\end{align*}
by \eqref{har-appl} and \eqref{c0inv}.
On the other hand, if 
\[
m(t)-m(2t)\ge C_0^{-1}\bigl(M(2t)-m(2t)\bigr),
\]
then 
\begin{align*}
M(t)-m(t)&\le M(2t)-m(t)-\bigl(m(t)-m(2t)\bigr)\\
&\le \Lambda\bigl(M(2t)-m(2t)\bigr).
\end{align*}
Thus \eqref{(6.9)} always holds. Suppose then that $R\ge r$, with $r$ sufficiently large. Choose the integer $m\ge 1$ such that
$2^{m-1}\le R/r\le 2^m$. Then
\begin{align*}
M(r)-m(r)&\le \Lambda^{m-1}\bigl(M(2^{m-1}r)-m(2^{m-1}r)\bigr)\\
&\le \Lambda^{m-1}\bigl(M(R)-m(R)\bigr).
\end{align*}
Setting $\kappa=(-\log\Lambda)/\log 2$, we get 
$(r/R)^\kappa\ge 2^{-\kappa}\Lambda^{m-1}$, and therefore
\begin{equation}\label{(6.7)}
M(r)-m(r)\le 2^{\kappa}\left(\frac{r}{R}\right)^{\kappa}\bigl(M(R)-m(R)\bigr)
\end{equation}
for every $R\ge r$, with $r$ sufficiently large. 
Notice that \eqref{(6.7)} holds for all entire solutions satisfying \eqref{alkuehto}. Finally, if $u$ is an entire solution to \eqref{mingraph} 
such that
\[
\lim_{d(x,o)\to\infty}\frac{|u(x)|}{d(x,o)^{\kappa}}=0,
\]
the estimate 
\eqref{(6.7)} holds for $u$. Letting $R\to\infty$ in \eqref{(6.7)}, we obtain $M(r)-m(r)=0$ for all $r$ and, consequently, $u$ must be constant.
\end{proof}
   
\section{Existence results on rotationally symmetric manifolds}\label{sec4}

In this section we assume that $M$ is a rotationally symmetric Cartan-Hadamard manifold with the 
Riemannian metric given by
    \[
     ds^2 = dr^2 + f(r)^2 d\vartheta^2
    \]
where $r(x)=d(o,x)$ is the distance to a fixed point $o\in M$ and $f\colon(0,\infty)\to(0,\infty)$
is a smooth function with $f''\ge 0$. 
Then the (radial) sectional curvature of 
$M$ is given by $K(r) = -f''(r)/f(r)$.

On such manifold the Laplace operator can be written as
    \begin{equation}\label{laplacepolar}
     \Delta = \frac{\p^2}{\p r^2} + (n-1) \frac{f'\circ r}{f\circ r} \frac{\p}{\p r} + 
     \frac{1}{(f\circ r)^2} \Delta^{\Ss},
    \end{equation}
where $\Delta^{\Ss}$ is the Laplacian on the unit sphere $\Ss^{n-1}\subset T_oM$. For the gradient of a function $\varphi$ 
we have
    \begin{equation}\label{gradientpolar}
     \nabla \varphi = \frac{\p \varphi}{\p r} \frac{\p}{\p r} + \frac{1}{f(r)^2}\nabla^{\Ss}\varphi 
    \end{equation}
and
    \[
     |\nabla \varphi|^2 = \varphi_r^2 + f^{-2} |\nabla^{\Ss}\varphi|^2.
    \]
Here $\nabla^{\Ss}$ is the gradient on $\Ss^{n-1}$, $|\nabla^{\Ss}\varphi|$ denotes the norm of $\nabla^{\Ss}\varphi$ with respect to the Euclidean metric on $\Ss^{n-1}$,
 and $\varphi_r=\partial\varphi/\partial r$. More precisely, in geodesic polar 
coordinates $(r,\vartheta)$,
\begin{align*}
 \Delta\varphi(r,\vartheta) &= \frac{\p^2\varphi(r,\vartheta)}{\p r^2} + (n-1) \frac{f'(r)}{f(r)} \frac{\p\varphi(r,\vartheta)}{\p r} + 
     \frac{1}{f(r)^2} \Delta^{\Ss}\tilde{\varphi}(\vartheta),\\
\nabla\varphi(r,\vartheta)&=\frac{\partial\varphi(r,\vartheta)}{\partial r}\frac{\partial}{\partial r}
+\frac{1}{f(r)^2}\nabla^{\Ss}\tilde{\varphi}(\vartheta)\in \R\oplus T_{\vartheta}\Ss^{n-1},
\end{align*}
where $\tilde{\varphi}\colon\Ss^{n-1}\to\R,\ \tilde{\varphi}(\vartheta)=\varphi(r,\vartheta)$ for each fixed $r>0$.

Existence of non-constant bounded harmonic functions on rotationally symmetric manifolds was considered 
in \cite{march}, where March proved, with probabilistic arguments, that such functions exist if and only if  
    \[
     J(f)\coloneqq \int_1^\infty \Big( f^{n-3}(r)\int_r^\infty f^{1-n}(\rho) d\rho\Big)dr < \infty.
    \]
In terms of radial sectional curvature we have (for the proof see \cite{march})
    \[
J(f) < \infty \quad \text{if } K(r) \le -\frac{c}{r^2\log r} \text{ for } c>c_n \text{ and large } r,
    \]
and
    \[
     J(f) = \infty \quad \text{if } K(r) \ge -\frac{c}{r^2\log r} \text{ for } c<c_n \text{ and large } r,
    \]
where $K(r) = -f''(r)/f(r)$ and $c_2=1, \, c_n=1/2$ for $n\ge 3$.
Another proof for the existence was given in \cite{Va_lic} and our approach in this section is similar to that 
one.

\subsection{Minimal graph equation}  

First we consider the minimal graph equation and prove the following existence result.

\begin{thm} \label{rotsymmingraph}
Assume that 
	\begin{equation}\label{intcond}
	\int_1^{\infty}\Big( f(s)^{n-3} \int_s^{\infty} f(t)^{1-n} dt \Big) ds < \infty.
	\end{equation}
	Then there exist non-constant bounded solutions of the minimal graph equation and, moreover,
	the asymptotic Dirichlet problem for the minimal graph equation is uniquely solvable for any 
	continuous boundary data on $\pinf M$.
\end{thm}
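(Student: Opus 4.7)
The plan is to prove existence by exhaustion. I exhaust $M$ by geodesic balls $B(o,R)$, solve the Dirichlet problem for \eqref{mingraph} on each with boundary data obtained from $\theta\in C(\pinf M)$ by radial extension, extract a subsequential limit as $R\to\infty$ using interior gradient bounds, and identify the limit's asymptotic values by means of barriers whose existence rests on \eqref{intcond}.

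For the first step, since $M$ is Cartan-Hadamard every geodesic sphere $\partial B(o,R)$ has positive mean curvature with respect to the inward normal, so the classical Serrin-type theory yields a unique $u_R\in C^{\infty}(B(o,R))\cap C(\bar{B}(o,R))$ solving \eqref{mingraph} with $u_R|_{\partial B(o,R)}=\tilde\theta|_{\partial B(o,R)}$, where $\tilde\theta(r,\vartheta)=\theta(\vartheta)$ is the radial extension of $\theta$. The maximum principle gives $\|u_R\|_\infty\le\|\theta\|_\infty$ uniformly in $R$; together with an interior gradient estimate (for instance Proposition \ref{gradest}) on compact subsets, and elliptic regularity, a diagonal argument produces a subsequence converging in $C^{2}_{\loc}(M)$ to a bounded entire solution $u$ of \eqref{mingraph}.

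The heart of the argument is constructing, for each $x_0\in\pinf M$ and each $\ve>0$, a supersolution $\Phi^+$ and subsolution $\Phi^-$ of \eqref{mingraph} on $M\setminus\bar{B}(o,R_0)$ with $\Phi^-\le\tilde\theta\le\Phi^+$ on $\partial B(o,R_0)$ and $\Phi^\pm(x)\to\theta(x_0)\pm\ve$ as $x\to x_0$. Exploiting rotational symmetry, the natural ansatz is
\[
\Phi^\pm(r,\vartheta)=\theta(x_0)\pm\ve\pm A(r)\eta(\vartheta),
\]
where $\eta\ge 0$ is a smooth bump on $\Ss^{n-1}$ that vanishes in a small cone about $x_0$ and exceeds the oscillation of $\theta$ off a slightly larger cone, and $A\colon[1,\infty)\to(0,\infty)$ is a positive decreasing profile with $A(r)\to 0$. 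Using \eqref{laplacepolar}--\eqref{gradientpolar}, the minimal graph operator applied to $\Phi^\pm$ splits into a linear part, coinciding up to sign with
\[
A''(r)+(n-1)\frac{f'(r)}{f(r)}A'(r)-\frac{\lambda}{f(r)^2}A(r)
\]
attached to a spherical eigenvalue $\lambda$ of $\eta$, plus quasilinear corrections of size $O(|\nabla\Phi^\pm|^2)$. A natural candidate is a primitive of the integrand in \eqref{intcond}, e.g.\
\[
A(r)=C\int_r^\infty f(t)^{1-n}\int_1^t f(s)^{n-3}\,ds\,dt,
\]
which is finite and decreasing precisely because of \eqref{intcond}, with $A'(r)=-Cf(r)^{1-n}\int_1^r f(s)^{n-3}ds$ also small at infinity. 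Choosing $C$ sufficiently small then secures the super/subsolution property on $M\setminus\bar{B}(o,R_0)$ and the boundary inequality at $\partial B(o,R_0)$. The comparison principle for the minimal graph equation gives $\Phi^-\le u_R\le\Phi^+$ on $B(o,R)\setminus\bar{B}(o,R_0)$; passing first $R\to\infty$, then $x\to x_0$, then $\ve\downarrow 0$ yields $u(x)\to\theta(x_0)$. Uniqueness follows from the same comparison principle applied to the difference of two bounded solutions sharing asymptotic values.

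The main obstacle is verifying that the quasilinear corrections in the minimal graph operator (the $W^{-3}$ weight and the contraction of the Hessian with $\nabla\Phi^\pm\otimes\nabla\Phi^\pm$) can be absorbed without destroying the sign of the linear part; the smallness of $A$ and $A'$ forced by \eqref{intcond}, combined with the freedom to shrink the normalizing constant $C$, is exactly what makes this feasible and accounts for the elementary nature of the proof compared to the general constructions in \cite{CHR1}.
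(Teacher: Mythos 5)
Your overall skeleton---exhaustion by geodesic balls, uniform sup and interior gradient bounds, a diagonal argument, and barriers built from the radial profile $\int_r^\infty f(t)^{1-n}\int_1^t f(s)^{n-3}\,ds\,dt$, whose finiteness is exactly \eqref{intcond}---is the same as the paper's. The genuine gap is in the barrier ansatz $\Phi^\pm(r,\vartheta)=\theta(x_0)\pm\ve\pm A(r)\eta(\vartheta)$, and it is twofold. First, the comparison $\Phi^-\le u_R\le\Phi^+$ on the annulus $B(o,R)\setminus\bar{B}(o,R_0)$ requires the inequality on the \emph{outer} boundary $\partial B(o,R)$ as well, where $u_R=\tilde\theta$; there you need $A(R)\eta(\vartheta)\ge\theta(\vartheta)-\theta(x_0)-\ve$ for directions $\vartheta$ away from $x_0$, and since $A(R)\to0$ while the right-hand side can be of order $\osc\theta$, this fails for all large $R$ no matter how $\eta$ is normalized. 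A barrier localizing at $x_0$ must stay bounded away from zero at infinity in the other directions; yours vanishes at infinity in every direction. Second, the super/subsolution property itself breaks at the edge of the bump: with your $A$ one has $A''+(n-1)(f'/f)A'=-Cf^{-2}$, so the Laplacian of $A\eta$ equals $f^{-2}\bigl(-C\eta+A\,\Delta^{\Ss}\eta\bigr)$, and near $\partial\{\eta>0\}$ the function $\eta$ is small while $\Delta^{\Ss}\eta>0$, making this term positive; shrinking $C$ does not help since both terms scale with $C$. So the real obstacle is not the quasilinear corrections you flag, but the sign of the linear part and the outer boundary comparison.

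The paper sidesteps both problems with an \emph{additive} ansatz: for smooth data $b$ it sets $B(r,\vartheta)=b(\vartheta)$ and $\eta(r)=k\int_r^\infty f(t)^{1-n}\int_1^t f(s)^{n-3}\,ds\,dt$, and a direct computation shows $\eta+B$ is a supersolution and $-\eta+B$ a subsolution outside a compact set; the point is that $\Delta(\eta+B)=f^{-2}\bigl(-k+\Delta^{\Ss}b\bigr)$, so taking $k\ge\|b\|_{C^2}$ \emph{large} (rather than a small normalizing constant) absorbs all angular terms, including those coming from the nonlinearity. Because the exhausting problems are solved with boundary data $B|\partial B(o,\ell)$, the inequalities $-\eta+B\le u_\ell\le\eta+B$ hold on $\partial B(o,\ell)$ for free, and since $\eta(r)\to0$ the limit solution is pinned to $b$ at every point of $\pinf M$ simultaneously, so no localization at a single $x_0$ is needed; general continuous $\theta$ is then handled by uniform approximation by smooth $b_i$. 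If you wish to keep a pointwise barrier at $x_0$, replace the product by a sum, e.g.\ $\theta(x_0)+\ve+\eta(r)+k'\eta_0(\vartheta)$ with $k'\eta_0\ge\osc\theta$ off a cone about $x_0$, and rerun the paper's computation.
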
 

\begin{proof}
First, changing the order of integration, the condition \eqref{intcond} reads
        \begin{equation}\label{intcond2}
                \int_1^\infty \frac{\int_1^t f(s)^{n-3} ds}{f(t)^{n-1}} dt <\infty.
        \end{equation}
Now we interpret $\pinf M$ as $\Ss^{n-1}$ and let $b\colon \Ss^{n-1} \to \R$ be a smooth non-constant
function and define $B\colon M \setminus\{o\} \to \R$,
\[
 B(\exp(r\vartheta))=B(r,\vartheta) = b(\vartheta), \, \vartheta \in \Ss^{n-1}\subset T_oM.
 \] 
 Define also
        \[
                \eta(r) = k \int_r^\infty f(t)^{-n+1} \int_1^t f(s)^{n-3} ds dt,
        \]
with $k>0$ to be determined later, and note that by the assumption \eqref{intcond2} $\eta(r) \to 0$ as $r\to\infty$.

The idea in the proof is to use the functions $\eta$ and $B$, and condition \eqref{intcond2} to construct
barrier functions for the minimal graph equation to show the existence of solutions that extends continuously to the 
asymptotic boundary $\pinf M$ with prescribed asymptotic behaviour.

Begin by noticing that
  \begin{equation*}
   \eta'(r) = - k f(r)^{-n+1} \int_1^r f(s)^{n-3} ds < 0,
  \end{equation*}
  \begin{equation*}
   \eta''(r) = k(n-1) f'(r)f(r)^{-n} \int_1^r f(s)^{n-3}ds - k f^{-2}(r),
  \end{equation*}
and
 \begin{equation*}
   \Delta\eta = - k f^{-2}
  \end{equation*}
  where $\eta(x)\coloneqq\eta\bigl(r(x)\bigr)$.
The minimal graph equation for $\eta + B$ can be written as
    \begin{align}\label{mineqetaB}\begin{split}
      \dv \frac{\nabla(\eta+B)}{\sqrt{1+|\nabla(\eta+B)|^2}} &=
        \frac{\Delta (\eta+B)}{\sqrt{1+|\nabla(\eta+B)|^2}} \\
     &\qquad + \ang{\nabla(\eta+B), \nabla\Big(\frac{1}{\sqrt{1+|\nabla(\eta+B)|^2}}\Big)},\end{split}
    \end{align}
and we want to estimate the terms on the right hand side. First note that
    \begin{equation}\label{laplace}
        \Delta (\eta+B)(r,\vartheta) = - k f(r)^{-2} + f(r)^{-2}\Delta^{\Ss} b(\vartheta)
    \end{equation}
and
    \begin{equation*}
      |\nabla(\eta+B)(r,\vartheta)|^2 = \eta_r(r)^2 + f(r)^{-2} |\nabla^{\Ss}b(\vartheta)|^2.
    \end{equation*} 
Hence the second term on the right hand side of \eqref{mineqetaB} becomes
\begin{align*}
     & \ang{\nabla(\eta+B), \nabla\Big(\frac{1}{\sqrt{1+|\nabla(\eta+B)|^2}}\Big)} =
      \big( 1+\eta_r^2 + f^{-2}|\nabla^{\Ss} b|^2\big)^{-3/2} \\ &\qquad \cdot
      \Big( -\eta_r^2\eta_{rr} + \eta_r f_r |\nabla^{\Ss} b|^2 f^{-3}
      - f^{-4}\bigl\langle \nabla^{\Ss}b,\nabla^{\Ss}\bigl(|\nabla^{\Ss}b|^2\bigr)\bigr\rangle_{\Ss}/2 \Big)\\
&\qquad =
   \big( 1+\eta_r^2 + f^{-2}|\nabla^{\Ss} b|^2\big)^{-3/2}  
  \Big( -\eta_r^2\eta_{rr} + \eta_r f_r |\nabla^{\Ss} b|^2 f^{-3}\\
&\qquad\qquad      - f^{-4}\Hess^{\Ss}b(\nabla^{\Ss}b,\nabla^{\Ss}b)\Big),
           \end{align*}
where $\Hess^{\Ss}$ is the Hessian on $\Ss^{n-1}$. 
Using \eqref{mineqetaB} and \eqref{laplace} we get  
        \begin{align}\label{etaBestim}
        \dv &\frac{\nabla(\eta+B)}{\sqrt{1+|\nabla(\eta+B)|^2}} 
        = \big( 1+\eta_r^2 +
          f^{-2}|\nabla^{\Ss}b|^2 \big)^{-3/2}
          \bigg(  - \frac{k}{f^2} +\frac{\Delta^{\Ss} b}{f^2}-
            \frac{k \eta_r^2}{f^2}  \nonumber \\
           &\qquad \ + \frac{\eta_r^2 \Delta^{\Ss} b}{f^2}  - \frac{k|\nabla^{\Ss}b|^2}{f^4}  
         +  \frac{|\nabla^{\Ss}b|^2 \Delta^{\Ss} b}{f^4} - \eta_r^2\eta_{rr} + \frac{\eta_r f_r |\nabla^{\Ss}b|^2}{f^3} \nonumber
        \\
            &\qquad \  - \frac{\Hess^{\Ss} b\bigl(\nabla^{\Ss}b,\nabla^{\Ss}b\bigr)}{f^4}
            \bigg) \nonumber
         \\
        &= \big( 1+\eta_r^2 + f^{-2}|\nabla^{\Ss}b|^2 \big)^{-3/2}
               \bigg( f^{-2} \big( -k + \Delta^{\Ss}b - k\eta_r^2 + \eta_r^2 \Delta^{\Ss}b \big) \nonumber
          \\
            &\qquad \  
         + f^{-4} \Big( -k |\nabla^{\Ss}b|^2 + |\nabla^{\Ss}b|^2 \Delta^{\Ss}b - \Hess^{\Ss}b\bigl(\nabla^{\Ss}b,\nabla^{\Ss}b\bigr)\Big)  
        \\
            &\qquad \ -\eta_r^2 \Big( k(n-1) \frac{f_r}{f^n} \int_1^r f(s)^{n-3} ds - k f^{-2} \Big)
            + \eta_r f_r |\nabla^{\Ss}b|^2f^{-3}  \bigg)   \nonumber
        \\
        &= \big( 1+\eta_r^2 + f^{-2}|\nabla^{\Ss}b|^2 \big)^{-3/2}
                    \bigg( f^{-2} \big( -k + \Delta^{\Ss}b  + \eta_r^2 \Delta^{\Ss}b \big)  \nonumber
          \\
            &\qquad \   
         + f^{-4} \Big( -k |\nabla^{\Ss}b|^2 + |\nabla^{\Ss}b|^2 \Delta^{\Ss}b - \Hess^{\Ss}b\bigl(\nabla^{\Ss}b,\nabla^{\Ss}b\bigr)\Big)  \nonumber
        \\
            &\qquad \ -\eta_r^2 \Big( k(n-1) \frac{f_r}{f^n} \int_1^r f(s)^{n-3} ds  \Big)
            + \eta_r f_r |\nabla^{\Ss}b|^2f^{-3}  \bigg)   \nonumber
        \\
&\le \big( 1+\eta_r^2 + f^{-2}|\nabla^{\Ss}b|^2 \big)^{-3/2}
                    \bigg( f^{-2} \big( -k + \Delta^{\Ss}b  + \eta_r^2 \Delta^{\Ss}b \big)  \nonumber
          \\
            &\qquad \  
         + f^{-4}|\nabla^{\Ss}b|^2 \big( -k  +  \Delta^{\Ss}b +|\Hess^{\Ss}b| \big)  \nonumber
        \\
            &\qquad \ -\eta_r^2 \Big( k(n-1) \frac{f_r}{f^n} \int_1^r f(s)^{n-3} ds  \Big)
            + \eta_r f_r |\nabla^{\Ss}b|^2f^{-3}  \bigg)   \nonumber
        \\
        &\le 0\nonumber
\end{align} 
when we choose $r$ large enough and then $k \ge ||b||_{C^2}$ large enough.
Note that $\Ss^{n-1}$ is compact so $||b||_{C^2}$ is bounded.
Then the computation above shows that
        \begin{equation*}
        \dv \frac{\nabla(\eta+ B)}{\sqrt{1+|\nabla(\eta+B)|^2}} \le 0
        \end{equation*}
for $r$ and $k$ large enough. In particular, $\eta + B$ is a supersolution to the minimal graph
equation in $M\setminus B(o,r_0)$ for some $r_0$.
       
Choose $k$ so that \eqref{etaBestim} holds and $\eta  > 2 \max |B|$ on the geodesic sphere $\p B(o,r_0)$.
Then  
$a \coloneqq \min_{\p B(o,r_0)} (\eta +  B) >  \max B$.
Since $\eta(r)\to 0$ as $r\to\infty$, the function 
        \[
                w(x) \coloneqq \begin{cases}
                        \min\{(\eta +  B)(x) , a \} &\text{ if } x\in M \setminus B(o,r_0); \\
                        a &\text{ if } x\in B(o,r_0)
                \end{cases}
        \]
is continuous in $\bar{M}$ and coincide with $b$ on $\pinf M$. Moreover, $w$ is a global upper barrier 
for the asymptotic Dirichlet problem with the boundary values $b$ on $\pinf M$.
By replacing $\eta$ with $-\eta$ we obtain the global lower barrier $v$,
\[
v(x) \coloneqq \begin{cases}
                        \max\{(-\eta +  B)(x) , d \} &\text{ if } x\in M \setminus B(o,r_0); \\
                        d &\text{ if } x\in B(o,r_0),
                \end{cases}
\]
where $d=\max_{\p B(o,r_0)} (-\eta +  B)$. Notice that $v\le B\le w$ by construction. 

Next we solve the Dirichlet problem
        \[
                \begin{cases}
                \dv \dfrac{\nabla u_\ell}{\sqrt{1+|\nabla u_\ell|^2}} = 0 &\text{ in } B(o,\ell); \\
                u_\ell|\p B(o,\ell) = B| \p B(o,\ell)
                \end{cases}
        \]
in geodesic balls $B(o,\ell)$, with $\ell\ge r_0$. The existence of barrier functions implies that
        \[
          v \le u_\ell \le w
        \]
on $\p B(o,\ell)$ for all $\ell \ge r_0$.        
Hence, by the maximum principle, $(u_\ell)$ is a bounded sequence and we may apply gradient estimates in compact
subsets of $M$ to find a subsequence,
still denoted by $(u_\ell)$, that converges uniformly on compact subsets in the $C^2$-norm to an entire solution $u$.
The PDE regularity theory implies that $u\in C^\infty(M)$. Moreover,
$v \le u \le w$ and hence it follows that $u$ extends continuously to the boundary $\pinf M$ and has the
boundary values $b$.
    
Suppose then that $\theta\in C(\pinf M)$. Again we interpret $\pinf M$ as $\Ss^{n-1}\subset T_o M$. Let $b_i$ be a sequence of smooth
functions converging uniformly to $\theta$. For each $i$, let $u_i\in C(\bar{M})$ be a solution to \eqref{mingraph} in $M$ with 
$u_i|\pinf M =b_i$. Then the sequence $(u_i)$ is uniformly bounded and consequently their gradients $|\nabla u_i|$ are uniformly bounded. By a diagonal 
argument we find a subsequence that converges locally uniformly with respect to $C^2$-norm to an entire $C^\infty$-smooth solution $u$ of \eqref{mingraph}
that is continuous in $\bar{M}$ with $u|\pinf M=\theta$.          
     
For the uniqueness, assume that $u$ and $\tilde u$ are solutions to the minimal graph equation, continuous
up to the boundary, and $u=\tilde u$ on $\pinf M$. Assume that there exists $y\in M$ with $u(y) > \tilde u(y)$.
Now denote $\delta = (u(y) - \tilde u(y))/2$ and let $U\subset \{x\in M \colon u(x) > \tilde u(x) +\delta\}$
be the component containing the point $y$. Since $u$ and $\tilde u$ are continuous functions that coincides
on the boundary $\pinf M$, it follows that $U$ is relatively compact open subset of $M$. Moreover,
$u = \tilde u +\delta$ on $\p U$, which implies $u = \tilde u+\delta$ in $U$. This is a contradiction
since $y \in U$.
\end{proof}

In terms of the curvature bounds, we obtain the following corollary; see \cite[Theorem 2]{march} or the proof of Corollary~\ref{rotsymplaplacecor}.
\begin{cor}\label{rotsymmingraphcor}
Let $M$ be a rotationally symmetric $n$-dimensional Cartan-Hadamard manifold whose radial sectional curvatures outside a compact set 
	satisfy the upper bounds
	 \begin{equation}\label{min-n=2}
	K(P_x) \le - \frac{1 + \ve }{r(x)^2 \log r(x)}, \quad \text{if } n=2 
	 \end{equation}
      	and
   	 \begin{equation}\label{min-n>2}
  	K(P_x) \le - \frac{1/2 + \ve }{r(x)^2 \log r(x)}, \quad \text{if } n\ge3. 	 
   	 \end{equation}
     		Then the asymptotic Dirichlet problem for the minimal graph equation \eqref{mingraph} is
	solvable with any continuous boundary data on $\pinf M$. 
In particular, there are non-constant bounded entire solutions of \eqref{mingraph} in $M$.
\end{cor}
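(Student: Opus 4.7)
The plan is to derive this corollary from Theorem~\ref{rotsymmingraph} by translating the curvature upper bounds \eqref{min-n=2} and \eqref{min-n>2} into the integral condition \eqref{intcond}. Since $M$ is rotationally symmetric with warping function $f$ and $K(r)=-f''(r)/f(r)$, the curvature hypothesis is an ODE inequality
\[
f''(r) \ge \frac{c_\alpha}{r^{2}\log r}f(r), \qquad r\text{ large},
\]
with $c_\alpha = 1+\ve$ if $n=2$ and $c_\alpha=1/2+\ve$ if $n\ge 3$. The model solution to play against $f$ is $f_\alpha(r)=r(\log r)^{\alpha}$, for which a direct computation gives $f_\alpha''/f_\alpha=\alpha/(r^{2}\log r)+\alpha(\alpha-1)/(r^{2}(\log r)^{2})$, so that asymptotically the leading coefficient is $\alpha$. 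Choosing $\alpha\in(1/2,c_\alpha)$ when $n\ge 3$, respectively $\alpha\in(1,c_\alpha)$ when $n=2$, one gets $f''/f \ge f_\alpha''/f_\alpha$ on some $[r_0,\infty)$.

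First, I would invoke the Sturm comparison theorem (equivalently, the Hessian/Jacobi comparison for rotationally symmetric metrics) to conclude that $f(r)\ge C\,r(\log r)^{\alpha}$ for all sufficiently large $r$, for some $C>0$. Comparison with $f_\alpha$ requires checking initial data at $r_0$, but since $f$ is convex, unbounded and $f_\alpha$ is fixed, by adjusting the constant $C$ one compares $\log f$ and $\log f_\alpha$ via their derivatives: the inequality $(\log f)''+((\log f)')^2 \ge (\log f_\alpha)''+((\log f_\alpha)')^2$ together with standard asymptotic matching (or a Riccati comparison applied to $f'/f$) yields the desired lower bound on $f$.

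Next I would insert this lower bound into condition \eqref{intcond}. For $n\ge 3$, using $f(t)\gtrsim t(\log t)^{\alpha}$ gives
\[
\int_s^{\infty} f(t)^{1-n}\,dt \lesssim \int_s^{\infty} t^{1-n}(\log t)^{-\alpha(n-1)}\,dt \lesssim s^{2-n}(\log s)^{-\alpha(n-1)},
\]
so
\[
f(s)^{n-3}\int_s^{\infty} f(t)^{1-n}\,dt \lesssim s^{-1}(\log s)^{-2\alpha},
\]
whose integral over $[1,\infty)$ converges precisely because $2\alpha>1$. For $n=2$ the integrand becomes $f(s)^{-1}\int_s^\infty f(t)^{-1}\,dt \lesssim s^{-1}(\log s)^{1-2\alpha}$, whose integral converges because $2\alpha-1>1$. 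In both cases \eqref{intcond} holds, so Theorem~\ref{rotsymmingraph} applies and yields solvability of the asymptotic Dirichlet problem; non-constant bounded entire solutions follow by prescribing non-constant boundary data.

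The main obstacle is the Sturm/Riccati comparison step: one must be careful that the ODE inequality $f''/f\ge \alpha/(r^{2}\log r)$ at leading order actually forces the pointwise lower bound $f(r)\gtrsim r(\log r)^{\alpha}$, because the lower-order term $\alpha(\alpha-1)/(r^2(\log r)^2)$ in $f_\alpha''/f_\alpha$ has variable sign depending on $\alpha$. This is handled by choosing $\alpha$ strictly between $c_n$ and $c_\alpha$ so that one may absorb the lower-order discrepancy on a tail $[r_0,\infty)$, and then using the convexity of $f$ to propagate the comparison. Once this lower bound is in hand, the remaining integral computation is routine and exactly matches the convergence threshold claimed in March~\cite{march}.
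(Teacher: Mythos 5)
Your overall strategy --- reduce to Theorem~\ref{rotsymmingraph} by verifying the integral condition \eqref{intcond} through comparison of $f$ with the model $r(\log r)^{\alpha}$ --- is the same as the paper's, which plays the metric against $\phi(r)=r(\log r)^{c}$ and checks that the integral for the model converges precisely when $c>1/2$ (resp.\ $c>1$ for $n=2$). However, there is a genuine gap in your verification of \eqref{intcond} when $n>3$. The curvature \emph{upper} bound yields, via Sturm/Riccati comparison, only a \emph{lower} bound $f(t)\gtrsim t(\log t)^{\alpha}$; nothing bounds $f$ from above (the curvature may be far more negative than \eqref{min-n>2}, e.g.\ $f(r)\sim e^{r}$ is allowed). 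In the step
\[
f(s)^{n-3}\int_s^{\infty}f(t)^{1-n}\,dt\ \lesssim\ s^{-1}(\log s)^{-2\alpha}
\]
you substitute the lower bound into the factor $f(s)^{n-3}$, whose exponent is \emph{positive} for $n>3$; that substitution goes the wrong way and is not justified by your hypotheses. (For $n=2$ and $n=3$ only nonpositive powers of $f$ occur, and your computation is fine.)

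The conclusion is still correct, and there are two ways to close the gap. The paper's route is to invoke March's Lemma~5 \cite{march}, which provides monotonicity of the integral $J$ with respect to the curvature function $-f''/f$ (not with respect to $f$ itself); one then only needs the explicit computation for the normalized model $g$ built from $\phi$, as in the proof of Corollary~\ref{rotsymplaplacecor}. Alternatively, you can repair your argument directly using convexity: since $f'$ is nondecreasing, $f(0)=0$, and $f\to\infty$,
\[
\int_s^{\infty}f(t)^{1-n}\,dt\le\frac{1}{f'(s)}\int_s^{\infty}\frac{f'(t)}{f(t)^{n-1}}\,dt=\frac{f(s)^{2-n}}{(n-2)f'(s)},
\qquad f'(s)\ge\frac{f(s)}{s},
\]
so the integrand of \eqref{intcond} is at most $s/\bigl((n-2)f(s)^{2}\bigr)$, and now the lower bound $f(s)\gtrsim s(\log s)^{\alpha}$ with $\alpha>1/2$ does suffice. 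One of these fixes must be made explicit; as written, the case $n>3$ does not follow.
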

Indeed, the radial curvature assumptions \eqref{min-n=2} and \eqref{min-n>2} imply the integral condition \eqref{intcond}.
\subsection{$p$-Laplacian} 
Similar approach works also for the $p$-Laplacian and we prove the following existence result for $p\in(2,n)$.
The case $p=2$ equals to the case of usual harmonic functions, which is already known, and the case $p\ge n$ 
is discussed in Section \ref{parabol_sec}. The case $1<p<2$ remains open.
\begin{thm}\label{plaplaceexistence}
Let $p\in(2,n)$ and assume that 
	\begin{equation}\label{rotsymplaplaceint}
	\int_1^{\infty}\Big( f(s)^{\beta} \int_s^{\infty} f(t)^{\alpha} dt \Big) ds < \infty,
	\end{equation}
	$\alpha = -(n-1)/(p-1)$ and $\beta = (n-2p+1)/(p-1)$, i.e. $\alpha+\beta =-2$.  
	Then the asymptotic Dirichlet problem for the $p$-Laplacian is uniquely solvable for any 
	continuous boundary data on $\pinf M$, in particular, 	
	there exist entire non-constant bounded $p$-harmonic functions.
\end{thm}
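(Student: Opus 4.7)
The plan is to adapt the barrier scheme of Theorem~\ref{rotsymmingraph} to the $p$-Laplacian. Given a smooth $b\in C^{\infty}(\Ss^{n-1})$, set $B(r,\vartheta)=b(\vartheta)$ and, with $k>0$ to be fixed, take
\[
\eta(r)=k\int_{r}^{\infty}f(t)^{\alpha}\int_{1}^{t}f(s)^{\beta}\,ds\,dt;
\]
condition \eqref{rotsymplaplaceint} (after switching the order of integration by Fubini) guarantees $\eta(r)\to 0$ as $r\to\infty$, and $\eta'(r)=-kf(r)^{\alpha}\int_{1}^{r}f(s)^{\beta}\,ds<0$ for $r>1$. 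Using the radial identity
\[
\Delta_p\eta=|\eta'|^{p-2}\bigl((p-1)\eta''+(n-1)(f'/f)\eta'\bigr),
\]
the two arithmetic relations $(p-1)\alpha+(n-1)=0$ and $\alpha+\beta=-2$ built into the choice of exponents collapse this to the clean expression
\[
\Delta_p\eta=-(p-1)k\,f(r)^{-2}|\eta'(r)|^{p-2},
\]
so $\eta$ is a radial $p$-supersolution whose defect is strictly negative and linear in $k$.

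Next I would expand $\Delta_p(\eta+B)=|\nabla u|^{p-2}\Delta u+(p-2)|\nabla u|^{p-4}\Hess u(\nabla u,\nabla u)$ with $u=\eta+B$ in the polar frame via \eqref{laplacepolar}--\eqref{gradientpolar}. Every term new relative to the purely radial computation above carries at least one spherical derivative of $b$ (through $\nabla^{\Ss}b$, $\Delta^{\Ss}b$ or $\Hess^{\Ss}b$) paired with a further negative power of $f$, and can be bounded in absolute value by $C\|b\|_{C^2(\Ss^{n-1})}$ times a positive combination of $f^{-2}|\nabla u|^{p-2}$ and $f^{-4}|\nabla u|^{p-4}|\nabla^{\Ss}b|^2$; since $\Ss^{n-1}$ is compact, $\|b\|_{C^2}$ is finite. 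Choosing $k$ large in terms of $\|b\|_{C^2}$ and then $r_0$ large, the principal negative contribution from the previous step dominates and $\eta+B$ becomes a $p$-supersolution on $M\setminus B(o,r_0)$.

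From here the rest of the proof follows the one of Theorem~\ref{rotsymmingraph} verbatim: enlarge $k$ once more so that $\eta>2\max|B|$ on $\partial B(o,r_0)$, set $a=\min_{\partial B(o,r_0)}(\eta+B)$, and cap $\eta+B$ at $a$ inside $B(o,r_0)$ to obtain a continuous global upper barrier $w$ with $w|\pinf M=b$; the symmetric construction with $-\eta+B$ gives a lower barrier $v\le B\le w$. The $p$-Laplace Dirichlet problem with boundary data $B|\partial B(o,\ell)$ is classically solvable in each geodesic ball $B(o,\ell)$; comparison gives $v\le u_{\ell}\le w$, and interior $C^{1,\alpha}_{\loc}$-regularity for $p$-harmonic functions combined with a diagonal extraction produces an entire weak $p$-harmonic limit $u$ trapped between $v$ and $w$, hence continuous on $\bar M$ with $u|\pinf M=b$. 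A final uniform approximation of $\theta\in C(\pinf M)$ by smooth $b_i\in C^{\infty}(\Ss^{n-1})$ handles general continuous boundary data, and uniqueness follows by the comparison argument at the end of the proof of Theorem~\ref{rotsymmingraph}.

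The hard part will be the supersolution verification. Unlike the minimal graph operator, the $p$-Laplacian retains the genuinely nonlinear term $\Hess u(\nabla u,\nabla u)$, and substituting $u=\eta+B$ couples the radial decay $|\eta'|$ to the spherical derivatives of $b$ and produces a zoo of cross terms of several different orders in $f^{-1}$ and $|\eta'|$. The technical core of the argument is to organize these terms and verify that the single negative principal contribution $-(p-1)kf^{-2}|\eta'|^{p-2}$ dominates them uniformly in $r\ge r_0$ for one choice of $k$ depending only on $\|b\|_{C^2(\Ss^{n-1})}$.
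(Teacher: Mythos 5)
Your proposal is correct and follows essentially the same route as the paper: the same barrier $\eta+B$ with the exponents $\alpha=-(n-1)/(p-1)$ and $\beta=(n-2p+1)/(p-1)$, the same polar-frame expansion of $\Delta_p(\eta+B)$ into a radial principal part plus angular cross terms, and the same barrier-to-solution, approximation, and comparison-based uniqueness machinery. The only cosmetic difference is that you dominate the $b$-dependent terms by inserting a large factor $k$ into $\eta$ (as in the minimal-graph proof), whereas the paper instead assumes $|\Hess^{\Ss}b|<\varepsilon$ and afterwards removes this restriction via the homogeneity of $\Delta_p$ (noting that $k(\eta+B)$ is again a $p$-supersolution); both normalizations reduce to the same term-by-term verification.
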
 

\begin{proof}
 Again we interpret $\pinf M$ as $\Ss^{n-1}$. Let $b\colon \Ss^{n-1} \to \R$ be a smooth non-constant 
 function such that $|\Hess^{\Ss}b|<\varepsilon$, where $\varepsilon>0$ will be specified later. 
 Define $B\colon M \setminus 
\{o\} \to \R,$ $B(\exp(r\vartheta))=B(r,\vartheta) = b(\vartheta), \, \vartheta \in \Ss^{n-1}\subset T_oM$.
 Similarly as in the proof of Theorem \ref{rotsymmingraph} we define a function
    \[
     \eta(r) = \int_r^\infty f^\alpha(t) \int_1^t f^\beta(s)ds dt,
    \]
where $\alpha$ and $\beta$ are constants to be determined later. We show that the function $\eta + B,\ \eta(x)\coloneqq \eta\bigl(r(x)\bigr)$,  
is a supersolution for the $p$-Laplace equation, i.e.
\[
\Delta_p(\eta+B)\coloneqq \dv\bigl(|\nabla(\eta+B)|^{p-2}\nabla(\eta+B)\bigr)\le 0.
\] 
Since $\eta_r<0$ and $B_r=0$, we have $|\nabla(\eta+B)|>0$ in $M\setminus\{o\}$. 
First we compute
\begin{align*}
 \Delta_p (\eta +B) &
 = \dv (|\nabla (\eta +B)|^{p-2} \nabla (\eta +B))\\
 &= |\nabla (\eta +B)|^{p-2}  \Delta (\eta +B) \\
 &\qquad + \frac{p-2}{2}|\nabla(\eta+B)|^{p-4} \ang{\nabla(\eta +B), \nabla \bigl(|\nabla (\eta +B)|^2\bigr)}\\
 &= |\nabla (\eta +B)|^{p-4}
 \bigg[\bigg(\eta_r^2 + \frac{|\nabla^{\Ss}b|^2}{f^2}\bigg)\bigg(\eta_{rr}+(n-1)\frac{f_r\eta_r}{f} +\frac{\Delta^{\Ss}b}{f^2}\bigg)\\
&\qquad +(p-2)\bigg(\eta_r^2\eta_{rr}-\frac{\eta_r f_r |\nabla^{\Ss}b|^2}{f^3} + 
\frac{\Hess^{\Ss}b\bigl(\nabla^{\Ss}b,\nabla^{\Ss}b\bigr)}{f^4}\bigg)\bigg].
\end{align*}
Since we are interested in the sign of $\Delta_p(\eta+B)$, we may just consider the term inside the brackets. 
Again, by straightforward computation, we obtain
  \begin{align*}
& \frac{\Delta_p (\eta +B)}{|\nabla (\eta +B)|^{p-4}} = 
\left(\eta_r^2 + \frac{|\nabla^{\Ss}b|^2}{f^2}\right)\left(\eta_{rr}+(n-1)\frac{f_r\eta_r}{f} +\frac{\Delta^{\Ss}b}{f^2}\right)\\
&\qquad +(p-2)\left(\eta_r^2\eta_{rr}-\frac{\eta_r f_r |\nabla^{\Ss}b|^2}{f^3} +
\frac{\Hess^{\Ss}b\bigl(\nabla^{\Ss}b,\nabla^{\Ss}b\bigr)}{f^4} \right)\\   
&\quad =\eta_r^2\left((p-1)\eta_{rr} + (n-1)\frac{f_r\eta_r}{f} +  \frac{\Delta^{\Ss}b}{f^2}\right) \\
& \qquad + \frac{|\nabla^{\Ss}b|^2}{f^2}\left( \eta_{rr} + (n-p+1)\frac{f_r\eta_r}{f} +  \frac{\Delta^{\Ss}b}{f^2}\right)
+ (p-2)f^{-4}\Hess^{\Ss}b\bigl(\nabla^{\Ss}b,\nabla^{\Ss}b\bigr)\\
&\quad =\eta_r^2\left(-\bigl((p-1)\alpha+n-1\bigr)f^{\alpha-1}f_r\int_1^r f^{\beta}(s)ds - (p-1)f^{\alpha+\beta}+\frac{\Delta^{\Ss}b}{f^2}\right)\\
&\qquad + \frac{|\nabla^{\Ss}b|^2}{f^2}\left((\alpha+n-p+1)\frac{f_r\eta_r}{f}
   - f^{\alpha+\beta} + \frac{\Delta^{\Ss}b}{f^2}\right)\\
&\qquad +(p-2)f^{-4}\Hess^{\Ss}b\bigl(\nabla^{\Ss}b,\nabla^{\Ss}b\bigr).   
 \end{align*}
Then choosing $\alpha = -(n-1)/(p-1)$ and $\beta = (n-2p+1)/(p-1)$, i.e. such that $\alpha + \beta=-2$,
and recalling that $p\in(2,n)$ and $\eta_r<0$ we see that
\begin{align*}
 \frac{\Delta_p (\eta +B)}{|\nabla (\eta +B)|^{p-4}} & = 
 \frac{\eta_r^2}{f^2}(\Delta^{\Ss}b-p+1) + \frac{|\nabla^{\Ss}b|^2}{f^4}\left( \frac{(n-p)(p-2)ff_r\eta_r}{p-1}- 1 +\Delta^{\Ss}b\right)\\
&\quad +(p-2)f^{-4}\Hess^{\Ss}b\bigl(\nabla^{\Ss}b,\nabla^{\Ss}b\bigr) \\  
&\le \frac{\eta_r^2}{f^2}\big(-p+1+\Delta^{\Ss}b\big)\\
&\quad + \frac{|\nabla^{\Ss}b|^2}{f^4}\left( \frac{(n-p)(p-2)ff_r\eta_r}{p-1}- 1 +\Delta^{\Ss}b
+(p-2)|\Hess^{\Ss}b|\right)\\
& \le 0
\end{align*}
when $|\Hess^{\Ss}b|<\varepsilon$, with $\varepsilon>0$ small enough, e.g.     
$\varepsilon<\min\bigl(\tfrac{p-1}{n-1},\tfrac{1}{n+p-3}\bigr)$.
Hence $\eta + B$ is a $p$-supersolution in $M\setminus\{o\}$. Similarly, we obtain an estimate 
\begin{align*}
 \frac{\Delta_p (-\eta +B)}{|\nabla (-\eta +B)|^{p-4}} & = 
 \frac{\eta_r^2}{f^2}(p-1+\Delta^{\Ss}b) + \frac{|\nabla^{\Ss}b|^2}{f^4}\left( \frac{-(n-p)(p-2)ff_r\eta_r}{p-1} + 1 +\Delta^{\Ss}b\right)\\
&\quad +(p-2)f^{-4}\Hess^{\Ss}b\bigl(\nabla^{\Ss}b,\nabla^{\Ss}b\bigr) \\  
&\ge \frac{\eta_r^2}{f^2}\big(p-1+\Delta^{\Ss}b\big)\\
&\quad + \frac{|\nabla^{\Ss}b|^2}{f^4}\left( \frac{-(n-p)(p-2)ff_r\eta_r}{p-1}+ 1 +\Delta^{\Ss}b
-(p-2)|\Hess^{\Ss}b|\right)\\
& \ge 0,
\end{align*}
and therefore $-\eta+B$ is a $p$-subsolution in $M\setminus\{o\}$. Notice that $k(\eta+B)$ is a $p$-supersolution in $M\setminus\{o\}$ for 
all $k\ge 0$ and similarly $k(-\eta+B)$ is a $p$-subsolution in $M\setminus\{o\}$. Hence the assumption $|\Hess^{\Ss}b|<\varepsilon$ is not a restriction.
The asymptotic Dirichlet problem with any continuous boundary data $\varphi\in C(\pinf M)$ can then be uniquely solved either by Perron's method with a 
suitable choice of the function $b$ or approximating the given $\varphi\in C(\pinf M)$ by functions $b_i\in C^{\infty}$. 
For reader's convenience we sketch the latter argument. Indeed, for each $\ell\in\N$ we first 
 solve the Dirichlet problem
        \[
                \begin{cases}
                \dv \big(|\nabla u_\ell|^{p-2}\nabla u_\ell\big) = 0 &\text{ in } B(o,\ell); \\
                u_\ell|\p B(o,\ell) = B| \p B(o,\ell)
                \end{cases}
        \]
in geodesic balls $B(o,\ell)$. Then $-\eta+B\le u_\ell\le\eta+B$
and there exists a subsequence of $(u_\ell)$ converging uniformly on compact sets to an entire $p$-harmonic function $u$. Moreover, $u$ extends continuously to $\pinf M$ and has the boundary values $b$. Finally, given $\varphi\in C(\pinf M)$ we again interpret $\pinf M$ as $\Ss^{n-1}\subset T_o M$ and choose a sequence $b_i\in\C^\infty (\Ss^{n-1})$ converging uniformly to $\varphi$. For each $i$, let $u_i\in C(\bar{M})$ be a solution to \eqref{plaplaceeqn} in $M$ with 
$u_i|\pinf M =b_i$. Then the sequence $(u_i)$ has a subsequence that converges locally uniformly to an entire $p$-harmonic function $u$ that is continuous in $\bar{M}$ with $u|\pinf M=\varphi$. Hence $u$ solves the asymptotic Dirichlet problem 
with boundary data $\varphi\in C(\pinf M)$. The uniqueness of the solution follows exactly as in the case of the minimal graph equation. 
See \cite{Va1} and \cite{Ho} for further details.
\end{proof}

In terms of curvature bounds, we obtain the following corollary.
    \begin{cor}\label{rotsymplaplacecor}
Let $M$ be a rotationally symmetric $n$-dimensional Cartan-Hadamard manifold, with $n\ge 3$, whose radial sectional curvatures outside a compact set 
	satisfy 
  \begin{equation}\label{rotsymcurvas}
      K(P_x) \le - \frac{1/2 + \ve }{r(x)^2 \log r(x)}.
      \end{equation}
 Then the asymptotic Dirichlet problem for the $p$-Laplacian, with  $p\in(2,n)$, is uniquely solvable for any continuous boundary data on $\pinf M$. 
 In particular, there exist non-constant bounded $p$-harmonic functions on $M$.      
    \end{cor}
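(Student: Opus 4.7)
The corollary follows from Theorem \ref{plaplaceexistence} once we verify that the radial sectional curvature bound \eqref{rotsymcurvas} implies the integral convergence condition \eqref{rotsymplaplaceint}. My plan is first to pass from the curvature bound to a pointwise lower bound on the warping function $f$ via Sturm comparison applied to the Jacobi equation $f'' = -Kf$ with $f(0)=0,\ f'(0)=1$. The comparison model is the asymptotic solution $\tilde f(r)=r(\log r)^{1/2+\varepsilon'}$, for which one checks directly that $\tilde f''/\tilde f \sim (1/2+\varepsilon')/(r^2\log r)$ as $r\to\infty$. Choosing $\varepsilon'\in(0,\varepsilon)$ and comparing on $[R_0,\infty)$ for $R_0$ large, one obtains $f(r)\ge c\,r(\log r)^{1/2+\varepsilon'}$ for all sufficiently large $r$. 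In a Cartan-Hadamard manifold one also has $f''\ge 0$, hence $f'\ge 1$ and $f$ is increasing; this monotonicity will play an essential role below.

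Recall that $\alpha=-(n-1)/(p-1)$ satisfies $\alpha<-1$ (since $p<n$) and $\alpha+\beta=-2$. I would split according to the sign of $\beta$. When $\beta\le 0$, i.e.\ $p\ge (n+1)/2$, the lower bound on $f$ yields upper bounds on both $f(s)^{\beta}$ and $f(t)^{\alpha}$, namely $f(t)^{\alpha}\le C\,t^{\alpha}(\log t)^{\alpha(1/2+\varepsilon')}$ and similarly for $f(s)^{\beta}$. A routine asymptotic estimate (integration by parts, since $\alpha<-1$) gives
\[
\int_s^{\infty} f(t)^{\alpha}\,dt \le C\, s^{\alpha+1}(\log s)^{\alpha(1/2+\varepsilon')}.
\]
Multiplying by the bound on $f(s)^{\beta}$ and using $\alpha+\beta=-2$, the integrand becomes $C\,s^{-1}(\log s)^{-(1+2\varepsilon')}$, which is integrable at infinity precisely because $1+2\varepsilon'>1$.

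The delicate case is $\beta>0$ (i.e.\ $p<(n+1)/2$), and this is where I expect the main obstacle: the curvature upper bound produces only a lower bound on $f$, and no upper bound is available since the manifold may be arbitrarily negatively curved, so $f$ could grow very fast and $f(s)^{\beta}$ cannot be controlled from above pointwise. My workaround is to swap the order of integration and exploit monotonicity $f(s)\le f(t)$ for $s\le t$:
\[
J \,=\, \int_1^{\infty} f(t)^{\alpha} \int_1^{t} f(s)^{\beta}\,ds\,dt \,\le\, \int_1^{\infty} t\,f(t)^{\alpha+\beta}\,dt \,=\, \int_1^{\infty} t\,f(t)^{-2}\,dt.
\]
Applying the Sturm lower bound on $f$ reduces this to $C\int_1^{\infty} t^{-1}(\log t)^{-(1+2\varepsilon')}\,dt<\infty$. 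Thus in both cases the integral condition \eqref{rotsymplaplaceint} is verified, and Theorem \ref{plaplaceexistence} yields the asymptotic Dirichlet problem's unique solvability and the existence of non-constant bounded $p$-harmonic functions.
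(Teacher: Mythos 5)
Your argument is correct and reaches the same conclusion, but by a genuinely different route from the paper. The paper does not extract a pointwise bound on $f$ at all: it invokes \cite[Lemma 5]{march} to replace the warping function wholesale by the explicit model $\phi(r)=r(\log r)^c$ (suitably normalized so that $g(0)=0$, $g'(0)=1$), reduces the finiteness of \eqref{rotsymplaplaceint} to the same integral for $\phi$, and then computes that $\phi(s)^{\beta}\int_s^{\infty}\phi(t)^{\alpha}\,dt\approx \tfrac{p-1}{n-p}\,s^{-1}(\log s)^{-2c}$, which is integrable iff $c>1/2$; the relation $\alpha+\beta=-2$ does all the work and no case distinction on the sign of $\beta$ is needed. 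You instead derive the pointwise lower bound $f(r)\ge c\,r(\log r)^{1/2+\varepsilon'}$ and estimate the double integral directly; the obstruction you identify for $\beta>0$ (a lower bound on $f$ gives no upper bound on $f^{\beta}$) is real, and your resolution via Fubini plus the monotonicity $f(s)\le f(t)$, giving $\int_1^t f(s)^{\beta}\,ds\le t\,f(t)^{\beta}$ and hence $J\le\int_1^{\infty}t\,f(t)^{-2}\,dt$, is correct and rather elegant --- it makes the proof self-contained, at the price of the case split at $p=(n+1)/2$. One point deserves more care: the Sturm comparison ``on $[R_0,\infty)$'' does not follow from matching data at $R_0$, since convexity only gives $f'(R_0)/f(R_0)\ge 1/R_0$, which need not dominate $\tilde f'(R_0)/\tilde f(R_0)=1/R_0+(1/2+\varepsilon')/(R_0\log R_0)$. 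Either run the comparison from $r=0$ with a model curvature extended by $0$ on the compact part (legitimate since $K\le 0$ everywhere on a Cartan--Hadamard manifold, and the resulting model still has the asymptotics $r(\log r)^{1/2+\varepsilon'}$), or argue via the Wronskian $W=f'\tilde f-f\tilde f'$: it is nondecreasing where the curvatures are ordered, and if it stayed negative one would get $\int^{\infty}(q-\tilde q)f\tilde f\,dr<\infty$, contradicting $f,\tilde f\ge r$ and $q-\tilde q\gtrsim (\varepsilon-\varepsilon')/(r^2\log r)$. With that repair your proof is complete.
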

\begin{proof}
It is enough to show that the curvature assumption \eqref{rotsymcurvas} implies finiteness of the 
integral 
  \begin{equation*}
	\int_1^{\infty}\Big( f(s)^{\beta} \int_s^{\infty} f(t)^{\alpha} dt \Big) ds < \infty,
	\end{equation*}
 where $\alpha = -(n-1)/(p-1)$ and $\beta = (n-2p+1)/(p-1)$, i.e. $\alpha+\beta =-2$. Although this
 seems more complicated than the situation with \eqref{intcond}, it is essentially the same because
 $\alpha$ and $\beta$ are chosen so that $\alpha+\beta=-2$ which is same as the sum of the exponents
 in \eqref{intcond}. For the sake of convenience, we give some details.
 
 As in \cite{march}, define $\phi(r) = r(\log r)^c$, $c>0$. Choose $a>1$ such that $\phi'(a)>0$,
 $\phi''(a)>0$ and let $g(r) = (\phi(r+a) - \phi(a))/\phi'(a)$. Then $g(0)=0,\ g^\prime(0)=1$, and 
$-g''(r)/g(r) \le 0$ behaves asymptotically as 
    \[
    -\frac{g''}{g}(r)\approx
     -\frac{\phi''}{\phi}(r) = - \frac{c}{r^2 \log r}\left( 1 + \frac{c-1}{\log r} \right)
    \]
    as $r\to\infty$.
Applying \cite[Lemma 5]{march}, we see that \eqref{rotsymplaplaceint} is equivalent to the finiteness
of the similar integral condition for $g$. Moreover, $g(r)$ behaves asymptotically as $\phi(r)$, so
it is enough to show 
    \begin{equation}\label{intcondforphi}
     \int_2^{\infty}\Big( \phi(s)^{\beta} \int_s^{\infty} \phi(t)^{\alpha} dt \Big) ds < \infty.
    \end{equation}
But $\int_s^\infty \phi(t)^\alpha dt$ behaves asymptotically as 
\[    
  \frac{s^{\alpha+1}(\log s)^{c\alpha}}{-\alpha-1},  
 \]
 and therefore
 \[
\phi(s)^{\beta} \int_s^{\infty} \phi(t)^{\alpha}dt\approx \frac{p-1}{n-p}\frac{1}{s(\log s)^{2c}}
\]
as $s\to\infty$. 
Hence \eqref{intcondforphi} holds if and only if $c> 1/2$.
\end{proof}

\section{$p$-parabolicity when $p\ge n$}\label{parabol_sec}

In this section we show that the upper bound $p < n$ in Theorem \ref{plaplaceexistence} cannot be
improved. Namely there exist manifolds that satisfy the curvature assumption \eqref{rotsymplaplaceint}
and are $p$-parabolic when $p\ge n$. Recall that a Riemannian manifold $N$ is 
called \emph{$p$-parabolic}, $1<p<\infty$, if
    \[
     \pcap(K,N) = 0
    \]
for every compact set $K\subset N$. Here the \emph{$p$-capacity} of the pair $(K,N)$ is defined as
\[
\pcap(K,N)=\inf_{u}\int_{N}|\nabla u|^p d\mu_0,
\]
where the infimum is taken over all $u\in C^{\infty}_0(N)$, with $u|K\ge 1$.
In \cite[Proposition 1.7]{holDuke} it was shown that a complete Riemannian manifold is $p$-parabolic
if
  \[
   \int^\infty \left( \frac{t}{V(t)} \right)^{1/(p-1)} dt = \infty,
  \]
where $V(t) = \mu_0(B(o,t))$ and $o\in N$ is a fixed point. We apply this to get the following result.
\begin{thm}\label{parabthm}
 Let $\alpha > 0$ be a constant and assume that $M$ is a complete $n$-dimensional Riemannian manifold whose radial sectional curvatures 
 satisfy 
  \begin{equation}\label{paracurv}
    K_M(P_x) \ge - \frac{\alpha}{r(x)^2 \log r(x)}
  \end{equation}
for every $x$ outside some compact set and every $2$-dimensional subspace $P_x\subset T_xM$ containing $\nabla r(x)$. Then
$M$ is $p$-parabolic 
\begin{enumerate}
\item[(a)] if $p=n$ and $0<\alpha\le 1$; or
\item[(b)] $p>n$ and $\alpha>0$.
\end{enumerate}
\end{thm}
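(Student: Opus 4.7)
The plan is to invoke the volume-based sufficient condition for $p$-parabolicity already recalled in the excerpt, namely that $M$ is $p$-parabolic whenever
\[
\int^{\infty}\!\Big(\frac{t}{V(t)}\Big)^{1/(p-1)}\,dt=\infty,
\]
where $V(t)=\mu_{0}\bigl(B(o,t)\bigr)$. Thus the whole problem reduces to producing a sufficiently sharp \emph{upper} bound on $V(t)$ from the radial sectional-curvature lower bound \eqref{paracurv} and then checking divergence of the integral in the two regimes for $(p,\alpha)$.

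\textbf{Volume bound via radial comparison.} The hypothesis \eqref{paracurv} controls precisely the sectional curvatures of $2$-planes containing $\nabla r$, and these are the only curvatures entering the Jacobi equation $J''+R(J,\gamma')\gamma'=0$ for a normal Jacobi field along a radial geodesic $\gamma$. A Rauch/Hessian comparison argument for a variable (rather than constant) radial curvature bound shows that if $f$ is the positive solution of the scalar model equation
\[
f''(r)=\frac{\alpha}{r^{2}\log r}\,f(r)
\]
with suitable positive initial data at some large $r_{1}$, then any normal Jacobi field $J$ along a radial geodesic with $J(0)=0$ satisfies $|J(r)|\le Cf(r)$ for $r\ge r_{1}$. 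Substituting the ansatz $f(r)=r(\log r)^{\alpha}$ into the ODE gives
\[
\frac{f''(r)}{f(r)}=\frac{\alpha}{r^{2}\log r}+O\!\left(\frac{1}{r^{2}(\log r)^{2}}\right),
\]
so a routine ODE argument yields $f(r)\asymp r(\log r)^{\alpha}$ as $r\to\infty$. Integrating the resulting upper bound on the Jacobian of $\exp_{o}$ in geodesic polar coordinates, and noting that the cut locus only decreases volume while the contribution of $r\le r_{1}$ can be absorbed in constants, one obtains
\[
V(t)\le C_{1}\int_{0}^{t}f(r)^{n-1}\,dr\le C_{2}\,t^{n}(\log t)^{\alpha(n-1)}
\]
for all sufficiently large $t$.

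\textbf{Conclusion and main obstacle.} With the volume estimate just derived,
\[
\int^{\infty}\!\Big(\frac{t}{V(t)}\Big)^{1/(p-1)}dt\ \ge\ c\int^{\infty}\frac{dt}{t^{(n-1)/(p-1)}(\log t)^{\alpha(n-1)/(p-1)}}.
\]
If $p=n$ the right-hand side reduces to $c\int^{\infty}dt/\bigl(t(\log t)^{\alpha}\bigr)$, which diverges exactly when $\alpha\le 1$, giving case (a). If $p>n$, the exponent $(n-1)/(p-1)$ is strictly less than one, so the integral diverges for \emph{every} $\alpha>0$, giving case (b). Invoking \cite[Proposition~1.7]{holDuke} then yields $p$-parabolicity of $M$ in both cases. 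The main delicate point is the comparison step: \eqref{paracurv} is assumed only outside a compact set and only for $2$-planes containing $\nabla r$, so one must apply a Hessian/Rauch-type comparison theorem adapted to a variable radial curvature bound (rather than the constant-curvature statements usually quoted) and verify that the behaviour inside the compact exceptional set can be absorbed into $C_{1},C_{2}$; once this comparison is in place, the remaining argument is merely an ODE asymptotics calculation together with a one-variable integral test.
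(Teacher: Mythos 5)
Your proposal is correct and follows essentially the same route as the paper: both reduce the claim to the volume criterion of \cite[Proposition 1.7]{holDuke} and derive the upper bound $V(t)\le C\,t^{n}(\log t)^{\alpha(n-1)}$ from the radial curvature lower bound, then run the same integral test in the two regimes. The only difference is presentational — the paper extends the curvature minorant to a global smooth radial function and cites Bishop--Gromov directly, whereas you re-derive the volume comparison via Rauch/Jacobi-field estimates against the model $f(r)\asymp r(\log r)^{\alpha}$ — but this is the same comparison argument unpacked rather than a genuinely different proof.
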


\begin{proof}
 Let $R>1$ be so large that the curvature assumption \eqref{paracurv} holds in $M\setminus B(o,R)$
 and denote
  \[
    B = \inf \big\{K_M(P_x) \colon x \in \bar B(o,R-1)\big\} > -\infty.
  \]
Let $k\colon[o,\infty) \to (-\infty,0]$ be a smooth function that is constant in some neighborhood of $0$,
$k(t)\le B$ for $t\in [0,R-1]$, $k(t) \le -\alpha/(t^2\log t)$ for $t\in [R-1,R]$ and $k(t)= -\alpha/(t^2\log t)$
for all $t\ge R$. Then the sectional curvatures of $M$ are bounded from below by $k\circ r$.
Applying the Bishop-Gromov volume comparison theorem 
we obtain
  \[
V(r)= \mu_0 \big(B(o,r)\big) \le C r^n (\log r)^{\alpha(n-1)}
  \]
for some constant $C$ and for $r\ge R$ large enough.
  
Consider first the case $p=n$. Then 
  \begin{align*}
   \int_R^\infty \left( \frac{t}{V(t)} \right)^{1/(n-1)} dt 
   &\ge c\int_R^\infty \left( \frac{t}{t^n (\log t)^{\alpha(n-1)}} \right)^{1/(n-1)} dt \\
   &= c\int_R^\infty  \frac{1}{t (\log t)^{\alpha}}  dt = \infty
  \end{align*}
if $0<\alpha \le 1$. This proves the first case. On the other hand, if $p>n$, we have
$t^{n-1}(\log t)^{\alpha(n-1)} \le t^{n-1}(\log t)^{\alpha(p-1)}$ and
    \[
     \frac{t^{(n-1)/(p-1)}(\log t)^{\alpha}}{t} \longrightarrow 0
    \]
for any $\alpha > 0$ as $t\to\infty$, and hence
    \begin{align*}
     \int_R^\infty \left( \frac{t}{V(t)} \right)^{1/(p-1)} dt &\ge
     c\int_R^\infty \left( \frac{1}{t^{n-1}(\log t)^{\alpha(p-1)}} \right)^{1/(p-1)} dt \\
     &= c\int^\infty \frac{1}{t^{(n-1)/(p-1)}(\log t)^{\alpha}}  dt = \infty
    \end{align*}
for any $\alpha >0$. This proves the second case.  
  \end{proof}


\end{document}